\numberwithin{equation}{section}
\newtheorem{theorem}{Theorem}[section]
\newtheorem{lem}[theorem]{Lemma}
\newtheorem{thm}[theorem]{Theorem}
\newtheorem{cor}[theorem]{Corollary}
\def\endproof{$\hfill\Box$\\}
\def\R{\mathbb{R}}
\title{Heat flow of extrinsic biharmonic maps from a four dimensional manifold with boundary}
\author{Tao Huang, Lei Liu, Yong Luo, Changyou Wang}
\begin{document}
\maketitle
\begin{abstract}
Let $(M,g)$ be a four dimensional compact Riemannian manifold with boundary and $(N,h)$ be a compact Riemannian manifold without boundary. We show the existence of a unique, global weak solution of the heat flow of extrinsic biharmonic maps from $M$ to $N$ under the Dirichlet boundary condition, which is regular with the exception of at most finitely many time slices. We also discuss the behavior of solution near the singular times.
 As an immediate application,  we prove the existence of a smooth extrinsic biharmonic map from
 $M$ to $N$ under any Dirichlet boundary condition.
\end{abstract}

\section{Introduction}

Let $(M,g)$ be a Riemannian manifold with or without boundary and $(N,h)$ a Riemannian manifold without boundary and isometrically embedded in $\mathbb{R}^L$. For a nonnegative integer $l$ and $1\leq p<\infty$, the Sobolev space $W^{l,p}$ and H\"older space $C^{l+\alpha}(0<\alpha<1)$ are defined by:
\begin{eqnarray*}
&&W^{l,p}(M,N):=\Big\{u\in W^{l,p}(M,\R^L)\ |\ u(x)\in N ~{\rm{for ~a.e.~}} x\in M\Big\},
\\
&&C^{l+\alpha}(M,N):=\Big\{u\in C^{l,\alpha}(M,\R^L)\ |\ u(x)\in N~\forall x\in M\Big\}.
\end{eqnarray*}
On $W^{2,2}(M,N)$, there are two natural second order energy functionals defined by
$$F_2(u)=\int_M|\Delta u|^2dv_g,~~E_2(u)=\int_M|\tau(u)|^2dv_g,$$
where $\Delta$ is the Laplace-Beltrami operator of $(M,g)$, $$\tau(u)=\Delta u+A(u)(\nabla u,\nabla u)$$ is the tension field of $u$, and $A(\cdot)(\cdot,\cdot)$ is the second fundamental form of $(N,h)$ in $\R^L$.

A map is called an extrinsic (or intrinsic, resp.) biharmonic map if $u$ is a critical point of $F_2$ (or~ $E_2$, resp.). The Euler-Lagrange equation for $F_2$ is (cf. \cite{CWY,Lamm4,Lamm1,W2,W3,W4})
\begin{eqnarray}\label{E-L}
\Delta^2u&=&-\sum_{i=n+1}^L\big(\Delta\langle\nabla u,(d\nu_i\circ u)\nabla u\rangle+\nabla\cdot\langle\Delta u,(d\nu_i\circ u)\nabla u\rangle \nonumber
\\&&+\langle\nabla\Delta u,(d\nu_i\circ u)\nabla u\rangle\big)\nu_i\circ u \nonumber
\\&:=&-f(u),
\end{eqnarray}
where $\{\nu_i\}_{i=n+1}^L$ is a smooth local orthonormal frame field of the normal space of $N$. It is easy to see that
\begin{eqnarray}
|f(u)|\leq C(|\nabla^3u||\nabla u|+|\nabla^2u|^2+|\nabla u|^4).
\end{eqnarray}

Regularity issues for extrinsic biharmonic maps in dimensions $\geq4$ were first studied by Chang etc. in \cite{CWY} and for intrinsic biharmonic maps in dimension 4 by Ku \cite{Ku} and alternative proofs by Wang \cite{W1} and Strzelecki \cite{Strz} when the target manifold are the standard spheres $\mathbb{S}^n$. Wang extended the regularity result by \cite{CWY} on biharmonic maps  for general targets manifolds $N$ in \cite{W2,W3}, where he used a Coulomb gauge frame and Riesz potentials or Lorentz space estimates to prove that every weakly biharmonic maps from $\R^4$ to $N$ is smooth and every stationary biharmonic map from $\R^m(m\geq5)$ to $N$ satisfies ${\rm {dim}}\mathcal{S}\leq m-4$, i.e., the Hausdorff dimension of singular set is
at most $m-4$. Wang's partial regularity result was reproved by Lamm and Rivi\'ere \cite{Lamm2} and Struwe \cite{struwe2} extending the lower order gauge theory technique developed in \cite{Riviere1, Riviere2}.  See also Scheven \cite{Scheven} for partial regularity result for minimizing extrinsic biharmonic maps and Breiner and Lamm \cite{BL} for recent development and references therein.

The negative gradient flow for extrinsic biharmonic maps from a closed manifold (compact without boundary) was first studied by Lamm \cite{Lamm1}, where he proved the long time existence of global smooth solution  when either the dimension of $M$ is at most 3 or under a small initial energy condition in dimension 4. In general,  a finite time singularity may develop in dimension 4 \cite{Cooper,LY-2}. Motivated by the heat flow of harmonic maps from surfaces by Struwe \cite{struwe1}, it is natural to consider whether an extrinsic biharmonic map heat flow in dimension 4 has a global weak solution, which is regular outside at most finite many singularities. In this direction Gastel \cite{Gastel} and Wang \cite{W4} independently established a global weak solution for extrinsic biharmonic map heat flow in dimension 4, which is singular at most at finite time slices, but the problem of at most finite many singularities remains open (cf. Remark 1.2 of \cite{W4}).

In this paper we will study the extrinsic biharmonic map heat flow from a 4-dimensional compact manifold with boundary, i.e., we consider a solution $u\in C^{4+\alpha}(M\times(0,T),N)$ of
\begin{eqnarray}
\partial_tu+\Delta^2u&=&-f(u)\label{Heat-Flow}\label{equ1}
\\u(\cdot,0)&=&u_0,
\\u|_{\partial M}&=&g,
\\ \partial_\nu u|_{\partial M}&=&h,\label{equ2}
\end{eqnarray}
where $u_0\in W^{2,2}(M,N)$, $g\in C^{4+\alpha}(\partial M,N)$, and $h\in C^{3+\alpha}(\partial M, T_gN)$.

For $x_0\in \overline M$, let $B^M_R(x_0)$ denote the closed geodesic ball in $\overline M$ with center $x_0$ and
radius $R>0$, and set
$$E(u(t);B^M_R(x_0)):=\int_{B^M_R(x_0)}|\nabla^2u(t)|^2dx+\big(\int_{B^M_R(x_0)}|\nabla u(t)|^4dx\big)^\frac{1}{2},$$
for $0<R<\frac{1}{4}{\rm{inj}}_M$, here ${\rm{inj}}_M$ denotes the injectivity radius of $\overline M$.

The main result of this work is:

\begin{thm}\label{main thm}
For $dimM=4$, given any maps $u_0\in W^{2,2}(M,N), g\in C^{4+\alpha}(\partial M,N)$,
and $h\in C^{3+\alpha}(\partial M, T_gN)$,  there exists a unique global weak solution $u\in L^\infty(\R_+,
W^{2,2}(M, N))$ of \eqref{equ1}-\eqref{equ2}, with $u_t\in L^2(M\times\R_+)$,  satisfying:
\\(1) For any $0<T<\infty$,
\begin{eqnarray}\label{1}
2\int_0^T\int_M|u_t|^2dv_gdt+F_2(u(T))\leq F_2(u_0),
\end{eqnarray}
and $F_2(u(\cdot,t))$ is monotonically non-increasing with respect to $t\geq0$.
\\(2) There exist a positive integer $K$ depending only on $u_0, g, h, M, N$, and $0<T_1<\cdot\cdot\cdot<T_K\leq\infty$, which is characterized by the condition
\begin{align}
\limsup_{t\uparrow T_k}\max_{x\in\overline M} E(u(t);B^M_R(x))>\epsilon_1\quad \mbox{for all}\quad R>0,
\end{align}
where $\epsilon_1>0$ is the constant given by Theorem \ref{thm:C^k} below,
such that $u\in C_{loc}^{4+\alpha,1+\frac{\alpha}{4}}\big(M\times(\R_+\setminus{\cup_{k=1}^K\{T_k\}}), N\big)$.
\\(3) For each $k\in\{1,\cdots, K\}$, there exist sequences $t_{i}^k\uparrow T_k,$ $x_i^k\to x^k\in\overline M$,
and  $r_{i}^k\to0$ such that
\begin{itemize}
\item[(i)] if $x^k\in M$, there exists a non-constant biharmonic map $\omega^k\in C^\infty\cap W^{2,2}(\R^4,N)$ such that
    \begin{eqnarray}\label{3}
u_{i}^k(x)=u(x_i^k+r_{i}^kx, t_{i}^k)\to \omega^k ~{\rm{in}} ~C^4_{loc}(\R^4).
\end{eqnarray}
\item[(ii)] if $x^k\in \partial M$ and if $\displaystyle\limsup_{i\to\infty}\frac{dist(x_i^k,\partial M)}{r_i^k}\to \infty,$ then statement $(i)$ holds.
If there exists $0\le a<+\infty$ such that $\displaystyle\limsup_{i\to\infty}\frac{dist(x_i^k,\partial M)}{r_i^k}=a,$
 then there exists a non-constant biharmonic map $\omega^k\in C^\infty\cap W^{2,2}(\R_a^4,N)$, 
 with $\omega={\rm{constant}}, \ \partial_\nu\omega=0$ on $\partial \R_a^4$, such that
    \begin{eqnarray}
u_{i}^k(x)=u(x_i^k+r_{i}^kx, t_{i})\to \omega^k~{\rm{in}} ~C^4_{loc}(\R_a^{4+}),
\end{eqnarray}
where $\R_a^4:=\big\{(x^1,x^2,x^3,x^4)\in\R^4\ |\ x^4\geq-a\big\}$
and $\R_a^{4+}:=\big\{(x^1,x^2,x^3,x^4)\in\R^4\ |\ x^4>-a\big\}$.
\end{itemize}
\end{thm}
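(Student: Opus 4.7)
The plan is to follow the Struwe-type scheme as adapted to biharmonic maps by Gastel \cite{Gastel} and Wang \cite{W4} in the closed case, with additional work to handle the Dirichlet-type boundary conditions. First, I would smooth the initial datum $u_0$ while preserving its boundary traces and invoke a short-time existence result for classical solutions of \eqref{equ1}--\eqref{equ2} with smooth, compatible data; this is standard for the fourth-order parabolic Dirichlet problem via linearization and a fixed-point argument, once the natural compatibility conditions are arranged at $t=0$. Testing the equation with $u_t$ (which, together with $\partial_\nu u_t$, vanishes on $\partial M$ since $g,h$ are time-independent) gives
\begin{equation*}
\int_M |u_t|^2\,dv_g + \tfrac12\tfrac{d}{dt}F_2(u) = 0,
\end{equation*}
from which the energy inequality \eqref{1} and the monotonicity of $F_2(u(\cdot,t))$ follow; this in turn yields the uniform $W^{2,2}$ bound needed to pass to the weak limit of the approximating sequence.

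The heart of the argument is the $\epsilon_1$-regularity from Theorem \ref{thm:C^k}: once $E(u(t);B^M_R(x))$ stays below $\epsilon_1$ on a short backwards-in-time cylinder, one obtains $C^{4+\alpha,1+\alpha/4}$-bounds on the interior or boundary parabolic cylinder, as appropriate. Combined with the monotonicity of $F_2$, an $\epsilon_1$-covering argument extends the smooth solution past any regular time, and defines the singular times $T_k$ as precisely those at which the concentration condition in (2) first occurs. At each such $T_k$ a definite amount of energy is lost to the bubble extracted below, so $K \le F_2(u_0)/c(\epsilon_1)$ is finite. Uniqueness among weak solutions in $L^\infty_t W^{2,2}_x$ with $u_t\in L^2$ and satisfying the energy inequality will be established by a Gronwall-type argument on the $L^2$-norm of the difference of two solutions, using the a priori $C^{4+\alpha,1+\alpha/4}$-regularity away from the $T_k$ together with Sobolev embeddings available in dimension four.

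For the blow-up analysis at each $T_k$, pick $t_i^k \uparrow T_k$ and $x_i^k \in \overline M$ along a sequence realizing the $\limsup$ in (2), and let $r_i^k \downarrow 0$ be the least radius with $E(u(t_i^k);B^M_{r_i^k}(x_i^k))=\epsilon_1/2$. Rescaling by $u_i^k(x) = u(x_i^k + r_i^k x, t_i^k)$, the small-energy regularity yields uniform $C^4_{loc}$ estimates away from a discrete blow-up set; the $\partial_t u$ term rescales by $(r_i^k)^4 \to 0$ and disappears in the limit, so a subsequence converges to a biharmonic map $\omega^k$ on $\R^4$ or on $\R_a^4$, the domain being determined by $a=\lim \mathrm{dist}(x_i^k,\partial M)/r_i^k$. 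Non-triviality of $\omega^k$ follows from the $\epsilon_1/2$ normalization, since a constant limit would contradict $\epsilon_1$-regularity on the unit ball. When $a<\infty$, the rescaled boundary data $g, h$ oscillate on scale $r_i^k$ and so converge to a constant with vanishing normal derivative, producing the stated trivial boundary conditions for $\omega^k$ on $\partial \R_a^4$.

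The principal obstacle is making every step work \emph{up to the boundary}: short-time existence, the $\epsilon_1$-regularity, and particularly the boundary bubbling classification all require careful parabolic Schauder-type boundary estimates for the fourth-order operator $\partial_t+\Delta^2$ with the mixed Dirichlet/Neumann data, and the control of the ratio $\mathrm{dist}(x_i^k,\partial M)/r_i^k$ must be robust enough to produce the stated dichotomy between interior and half-space limits. A secondary delicate point is verifying that the concentration characterization really captures every non-smooth time, which in turn requires that each bubble carry at least $c(\epsilon_1)>0$ of energy so that $K$ is indeed finite.
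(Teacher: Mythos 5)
Your overall scheme coincides with the paper's Struwe-type argument: approximate $u_0$ by smooth maps with the correct traces, use a short-time existence result plus $\epsilon_1$-regularity to extend the smooth solution until energy concentration, obtain the energy inequality by testing with $u_t$ (which, with $\partial_\nu u_t$, vanishes on $\partial M$), piece solutions together across finitely many singular times, and prove uniqueness by an $L^2$ energy argument for the difference. The structure of the bubbling dichotomy between interior and half-space limits and the scaling of the boundary data also match.

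There is, however, a genuine gap in your blow-up step. You propose to fix $t_i^k\uparrow T_k$, choose $r_i^k$ as the least radius with $E(u(t_i^k);B_{r_i^k}(x_i^k))=\epsilon_1/2$, and rescale \emph{only in space} at the fixed time $t_i^k$, asserting that the $\partial_t u$ term ``rescales by $(r_i^k)^4\to 0$ and disappears in the limit.'' But after rescaling, the bi-tension field of $v_i(x)=u(x_i^k+r_i^kx,t_i^k)$ satisfies
\[
\|\tau_2(v_i)\|_{L^2(B_R)}^2=(r_i^k)^4\int_{B_{r_i^kR}(x_i^k)}|\partial_tu(\cdot,t_i^k)|^2\,dx,
\]
and the quantity $\|\partial_tu(\cdot,t_i^k)\|_{L^2}$ is \emph{not} under control at an arbitrary fixed time: one only knows $\partial_tu\in L^2(M\times[0,T_1])$. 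So the $(r_i^k)^4$ factor alone does not force $\tau_2(v_i)\to 0$, and neither Theorem \ref{thm:small-regularity} nor Theorem \ref{thm:C^k} can be applied uniformly along your sequence. Relatedly, a purely spatial choice of $r_i^k$ at time $t_i^k$ gives smallness of $E$ only on a single ball at a single time; you do not get the uniform-in-$(x,t)$ smallness on a backward parabolic cylinder that Theorem \ref{thm:C^k} actually requires.

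The paper avoids both problems by (a) defining the scale through a supremum over a backward cylinder $(x,t)\in\overline M\times[T_1-\delta^2,t_i^1]$ and all radii $r\le r_i^1$ (eq.\ \eqref{equation:blowup-position}), so that after a covering argument the rescaled flows have uniformly small energy on all unit balls and all times in $[-T,0]$; (b) rescaling parabolically $v_i(x,t)=u(x_i^1+r_i^1x,\,t_i^1+(r_i^1)^4t)$ and using Lemma \ref{lem:04} to propagate smallness backwards and keep the concentrated energy bounded below on $[t_i^1-T(r_i^1)^4,t_i^1]$ (eq.\ \eqref{equ3}); and (c) invoking Fubini on $\int_{-T}^0\int_{B_i}|\partial_t v_i|^2\,dx\,dt\to 0$ to select a good time slice $\sigma_i\in[-T/2,0]$ with $\|\partial_t v_i(\cdot,\sigma_i)\|_{L^2}\to 0$, which is what actually kills the bi-tension field in the limit. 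Non-triviality of the bubble then comes from the lower bound \eqref{equ3} transported to the time slice $\sigma_i$, together with the gap theorem \ref{thm:Gap-theorem} to obtain the fixed energy drop $\epsilon_0^2$ that bounds $K$. You should incorporate all three ingredients; as written your construction does not yield the claimed $C^4_{loc}$ convergence, nor does ``a constant limit contradicts $\epsilon_1$-regularity'' go through without the backward smallness of Lemma \ref{lem:04}. Finally, the paper's uniqueness argument is an absorption argument in $V(M^T)$ using the interpolation inequalities of Lemma \ref{Lem:interpolation} on a short time interval (choosing $s$ small so the coefficients fall below $1/4$), rather than invoking $C^{4+\alpha}$ regularity of the two solutions; your Gronwall suggestion is in the right spirit but should be replaced by this quantitative smallness-in-time argument, which only needs the $V(M^T)$ membership.
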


As an application of the heat flow of biharmonic maps, we obtain the following existence result.
\begin{thm}\label{thm:main-02}
Let $u$ be the global solution of \eqref{equ1}-\eqref{equ2} obtained by Theorem \ref{main thm}.
Then there exists $t_i \uparrow \infty$ such that $u(\cdot,t_i)$ converges weakly in $W^{2,2}(M)$ to a biharmonic map $u_\infty\in C^{4+\alpha}(\overline M,N)$ with boundary data $u_\infty|_{\partial M}=g$ and $\partial_\nu u_\infty|_{\partial M}=h$.
\end{thm}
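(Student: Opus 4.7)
The strategy is the classical one for a gradient flow at the critical dimension: use energy dissipation to extract a sequence along which the flow is almost stationary, pass to a weak $W^{2,2}$ limit, identify the limit as a weakly biharmonic map realizing the boundary data, and then upgrade regularity via $\varepsilon$-regularity and removable singularity theorems. Concretely, integrating \eqref{1} over $[0,\infty)$ yields $\int_0^\infty\!\int_M|u_t|^2\,dv_g\,dt\le\tfrac12 F_2(u_0)<\infty$, so there exists $t_i\uparrow\infty$ with $\|u_t(\cdot,t_i)\|_{L^2(M)}\to 0$; the monotone bound $F_2(u(\cdot,t_i))\le F_2(u_0)$ together with the fixed Dirichlet trace make $\{u(\cdot,t_i)\}$ uniformly bounded in $W^{2,2}(M,\R^L)$. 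Passing to a subsequence, $u(\cdot,t_i)\rightharpoonup u_\infty$ weakly in $W^{2,2}$, strongly in $W^{1,p}$ for every $p<4$, and a.e.\ on $M$; this gives $u_\infty(x)\in N$ a.e., and compactness of the trace operators preserves $u_\infty|_{\partial M}=g$ and $\partial_\nu u_\infty|_{\partial M}=h$.

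Next, I would pass to the limit in the weak formulation of \eqref{equ1} tested against $\varphi\in C_c^\infty(M,\R^L)$. Writing the nonlinearity via the divergence form of \eqref{E-L} and integrating by parts distributes the derivatives so that no explicit $\nabla^3 u$ remains; each resulting term is then a product of $\nabla^2 u$ (weakly convergent in $L^2$) and $\nabla u$ (strongly convergent in $L^p$ for every $p<4$), paired with smooth functions of $u$ and derivatives of $\varphi$. The compensated-compactness structure implicit in \eqref{E-L}, the same div-curl cancellation underlying the small-energy regularity theory of Wang \cite{W2,W3} and Struwe \cite{struwe2}, then allows one to identify the limit of $\int f(u(\cdot,t_i))\cdot\varphi$ with $\int f(u_\infty)\cdot\varphi$. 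Combined with $\|u_t(\cdot,t_i)\|_{L^2}\to 0$, this shows that $u_\infty$ is a weakly extrinsic biharmonic map with boundary data $(g,h)$.

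Regularity of $u_\infty$ then follows from the $\varepsilon$-regularity of Theorem \ref{thm:C^k}. Because $F_2(u_\infty)\le\liminf F_2(u(\cdot,t_i))<\infty$, the concentration set $\Sigma=\{x\in\overline M:\lim_{r\to 0}E(u_\infty;B^M_r(x))>\varepsilon_1\}$ is finite (with cardinality bounded by $F_2(u_0)/\varepsilon_1$), and $u_\infty\in C^{4+\alpha}_{\mathrm{loc}}(\overline M\setminus\Sigma,N)$. Interior points of $\Sigma$ are removable by the removable-singularity theorem for $W^{2,2}$ biharmonic maps in dimension four (cf.\ \cite{W2,W3,Scheven,BL}); boundary points of $\Sigma$ are ruled out by the boundary version of $\varepsilon$-regularity together with standard Schauder theory, using the smoothness of $g$ and $h$. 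Hence $\Sigma=\emptyset$ and $u_\infty\in C^{4+\alpha}(\overline M,N)$.

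The main obstacle is the nonlinear passage to the limit in the second step: the term $\langle\nabla\Delta u,(d\nu_j\circ u)\nabla u\rangle$ in $f(u)$ is critical under a $W^{2,2}$ bound in dimension four, so weak $W^{2,2}$ convergence alone does not identify the limit of $f(u_i)$ without exploiting the div/double-div structure of \eqref{E-L}. The removal of possible boundary singularities of $u_\infty$ is the second delicate point, where the boundary $\varepsilon$-regularity developed as a central technical output of the present paper is essential.
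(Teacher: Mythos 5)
Your overall strategy — energy dissipation to get a Palais--Smale-type sequence, weak $W^{2,2}$ compactness, pass to the limit in the equation, upgrade regularity — is the same as the paper's, and it is correct. However, you handle the two delicate steps differently, and on one of them you make an unnecessary detour.

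For the nonlinear passage to the limit, the paper does not re-argue the compensated-compactness structure of $f(u)$; it simply observes that $g_i:=-\partial_t u(\cdot,t_i)\to0$ in $L^2(M)$, hence in $(W^{2,2})^\ast$, and then invokes the weak compactness theorem of Zheng \cite{Zh}, which asserts precisely that a sequence of approximate biharmonic maps bounded in $W^{2,2}$ with bi-tension fields converging to zero in $(W^{2,2})^\ast$ has a weakly biharmonic weak limit. Your sketch of distributing derivatives so that no $\nabla^3 u$ survives and then exploiting the div/double-div structure is in the spirit of the proof of that theorem, but as written it is not a proof; the critical quadratic term involving $\nabla^2 u\otimes\nabla^2 u$ does not obviously pass to the limit under weak $L^2$ convergence of $\nabla^2 u$ without the cancellation that Zheng's result packages. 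So this step in your argument amounts to re-deriving a cited black box, and you would need to actually carry out that derivation to have a complete proof.

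For the regularity of $u_\infty$, your introduction of a concentration set $\Sigma$ and an appeal to a removable-singularity theorem is a misstep. For a \emph{fixed} map $u_\infty\in W^{2,2}(M)$, absolute continuity of the Lebesgue integral gives $\lim_{r\to0}\int_{B_r(x)}|\nabla^2 u_\infty|^2\,dx=0$ for every $x\in\overline M$, so your set $\Sigma$ is automatically empty: energy concentration is a phenomenon of sequences, not of a single $W^{2,2}$ function. The correct (and the paper's) step is to cite directly the interior full regularity theorem of Wang \cite{W2} — every $W^{2,2}$ weakly biharmonic map from a four-manifold is smooth, precisely because of this absolute-continuity scaling — together with the boundary regularity theorem of Lamm and Wang \cite{Lamm3} for the Dirichlet data $(g,h)$. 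Your conclusion $\Sigma=\emptyset$ is right, but for the wrong reason, and the removable-singularity detour is not needed.
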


The paper is organized as follows. In section 2, we prove a small energy regularity result for biharmonic maps, a
main tool in our a priori estimates, and as a corollary, we obtain a gap theorem for biharmonic map under the Dirichlet boundary condition. At the end of this section, we prove several interpolation inequalities which will be used frequently in the subsequent sections. In section 3, we give a priori estimates for the heat flow and
the uniform local $W^{4,2}$ estimates in time under the assumption of small energy on a ball.
In section 4, we prove the main theorems, Theorem \ref{main thm} and Theorem \ref{thm:main-02}.

Throughout this paper, the letter $C$ denotes a positive constant that depends only on $M,N,u_0,g$, whose values
may vary from lines to lines. If it depends on some other quantity, then we will point it out. For example, $C(R)$ is a positive constant depends on $R$.

\

\noindent\textbf{Additional Notations}. For $\Omega\subset\R^4$ and $0\le s<t\le\infty$,
denote $\Omega_s^t=\Omega\times [s,t]$, $M_s^t=M\times [s,t]$, and $M^T=M\times [0,T]$.
Also denote the standard Sobolev and H\"{o}lder spaces by
$W^{m,n}_p(M^T)$ and $C^{m+\alpha,n+\beta}(M^T)$.

We denote $B_R$ (or $B_R(0)$) as the standard ball in $\R^4$ with radius $R$ and center $0$. Denote $x'=(x^1,x^2,x^3)\in\R^3$,
\begin{align*}
B^+_R:=\Big\{(x',x^4)||x'|^2+|x^4|^2\leq R^2,\ x^4\geq 0\Big\}\ and \ \partial^0B^+_R:=\Big\{(x',x^4)||x'|^2+|x^4|^2\leq R^2,\ x^4= 0\Big\},
\end{align*}
and
\begin{align*}
V(M_s^t):=\Big\{u&:M\times [s,t]\to N|\sup_{s\leq\sigma\leq t}\big(\|\nabla^2u\|_{L^2(M)}+\|\nabla u\|_{L^{4}(M)}\big)\\
&\quad+\int_{M^t_s}(|\partial_tu|^2+|\nabla^4u|^2)\,dv_gdt<\infty\Big\}.
\end{align*}

\section{Some basic theorems and interpolation inequalities}

In this section we prove several basic theorems, including the small energy regularity theorem and the gap theorem. At the end of this section, we derive some interpolation inequalities which will be used later.

\begin{thm}($\varepsilon_1-$regularity)\label{thm:small-regularity}\\
(i) If $u\in W^{4,p}(B_1)$, $p>1$, is an approximated biharmonic map with bi-tension field $\tau_2(u)\in L^p(B_1)$, i.e.$$\Delta^2u=-f(u)+\tau_2(u),$$ where $f(u)$ is defined in \eqref{E-L}.
Then there exists a constant $\epsilon_1>0$ such that if $E(u;B_1)\leq\epsilon_1$, then
\begin{eqnarray*}
\big\|u-\bar{u}\big\|_{W^{4,p}(B_{1/2})}\leq C(p,N)\Big(\|\nabla^2u\|_{L^2(B_1)}+\|\nabla u\|_{L^4(B_1)}+\|\tau_2 (u)\|_{L^p(B_1)}\Big),
\end{eqnarray*}
where $\displaystyle\bar{u}=\frac{1}{|B_1|}\int_{B_1}udx$ is the mean value of $u$ over the unit ball.\\
(ii) If $u\in W^{4,p}(B^+_1)$, $p>1$, is an approximated biharmonic map with tension field $\tau_2(u)\in L^p(B^+_1)$ and the Dirichlet boundary value $$u|_{\partial^0B^+_1}=g\quad and \quad \frac{\partial u}{\partial \vec{n}}|_{\partial^0B^+_1}=h,$$ where $g\in C^4(\partial^0B_1^+), h\in C^3(\partial^0 B_1^+)$ and $\vec{n}$ is the outward unit normal vector of $\partial^0B^+_1$. Then there exists a constant $\epsilon_1>0$ such that if $E(u;B^+_1)\leq\epsilon_1$, then
\begin{eqnarray*}
&&\|u-\bar{u}\|_{W^{4,p}(B_{1/2}^+)}\\
&&\leq C(p,N)\Big(\|\nabla^2u\|_{L^2(B_1^+)}+\|\nabla u\|_{L^4(B_1^+)}+\|\tau_2 (u)\|_{L^p(B_1^+)}+\|g\|_{W^{4,2}(\partial^0 B_1^+)}+\|h\|_{W^{3,2}(\partial^0B_1^+)}\Big),
\end{eqnarray*}
where $\displaystyle\bar{u}:=\frac{1}{|\partial^0 B^+_{1/2}|}\int_{\partial^0 B^+_{1/2}}u$ is the mean value of $u$ over the boundary $\partial^0B_1^+$.
\end{thm}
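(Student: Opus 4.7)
The plan is to prove both parts by a localization-and-absorption scheme, first establishing the $W^{4,2}$ estimate and then bootstrapping to $W^{4,p}$ for general $p>1$. The pointwise bound $|f(u)|\le C(|\nabla^3u||\nabla u|+|\nabla^2u|^2+|\nabla u|^4)$ is scale-critical in dimension four, so the smallness of $E(u;B_1)\le\epsilon_1$ must appear explicitly as a prefactor in each nonlinear estimate; otherwise nothing can be absorbed to the left-hand side.

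For part (i), I would fix a cutoff $\eta\in C_0^\infty(B_{3/4})$ with $\eta\equiv 1$ on $B_{1/2}$ and apply the Calder\'on--Zygmund bound $\|w\|_{W^{4,p}(\R^4)}\le C(p)\|\Delta^2 w\|_{L^p(\R^4)}$ to $w=\eta^m(u-\bar u)$ for sufficiently large $m$. Expanding $\Delta^2 w$ via the commutator $[\Delta^2,\eta^m]$ and substituting the PDE produces, on the right-hand side, $\eta^m f(u)$, $\eta^m\tau_2(u)$, and terms of the form $\nabla^j\eta\cdot\nabla^{4-j}u$ ($j\ge 1$) supported in $B_{3/4}\setminus B_{1/2}$. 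For $p=2$, the three summands of $\|f(u)\|_{L^2}$ are controlled by Gagliardo--Nirenberg interpolations: $\|\nabla^3u\,\nabla u\|_{L^2}\le\|\nabla^3u\|_{L^4}\|\nabla u\|_{L^4}\le C\epsilon_1^{1/2}(\|\nabla^4u\|_{L^2(B_{3/4})}+\text{lower order})$ via the critical embedding $W^{1,2}\hookrightarrow L^4$; $\|(\nabla^2u)^2\|_{L^2}=\|\nabla^2u\|_{L^4}^2\le C\|\nabla^4u\|_{L^2}\|\nabla^2u\|_{L^2}+\text{lower order}$ by $\|f\|_{L^4}^2\le C\|\nabla^2f\|_{L^2}\|f\|_{L^2}$; and $\|(\nabla u)^4\|_{L^2}=\|\nabla u\|_{L^8}^4$ is handled by applying $\|f\|_{L^8}^2\le C\|\nabla^2f\|_{L^2}\|f\|_{L^4}$ to $\eta\nabla u$, after which the residual $\|\nabla^3u\|_{L^2}^2$ is closed via $\|\nabla^3u\|_{L^2}^2\le\|\nabla^4u\|_{L^2}\|\nabla^2u\|_{L^2}$. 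With $\epsilon_1$ chosen small enough, the top-order $\|\nabla^4u\|_{L^2}$ on the right absorbs into the left, producing the $W^{4,2}$ bound on $B_{1/2}$.

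To upgrade from $p=2$ to general $p>1$, the $W^{4,2}$ estimate combined with the Sobolev chain $W^{4,2}\hookrightarrow W^{3,2}\hookrightarrow W^{2,4}\hookrightarrow L^\infty$ in dimension four yields $\nabla u\in L^\infty$ and $\nabla^2u\in L^q$ for every $q<\infty$ locally; hence $f(u)\in L^p$ with quantitative bounds, and a standard Calder\'on--Zygmund iteration over nested balls delivers the $W^{4,p}$ estimate, with the small-energy factor surviving in the nonlinearity at every step.

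For part (ii), the plan is to reduce to the interior situation by subtracting a biharmonic extension $\Phi\in W^{4,p}(B_1^+)$ of the boundary data satisfying $\Phi|_{\partial^0 B_1^+}=g$, $\partial_\nu\Phi|_{\partial^0 B_1^+}=h$, and $\|\Phi\|_{W^{4,p}(B_1^+)}\le C(\|g\|_{W^{4,p}(\partial^0 B_1^+)}+\|h\|_{W^{3,p}(\partial^0 B_1^+)})$, available by standard boundary value theory for $\Delta^2$. Setting $v=u-\Phi$, $v$ satisfies $v=\partial_\nu v=0$ on $\partial^0 B_1^+$ and $\Delta^2v=-f(u)+\tau_2(u)-\Delta^2\Phi$, and the cutoff-absorption argument of part (i) applies verbatim, now based on the $L^p$ theory for the biharmonic operator on a half-ball with zero Dirichlet data; the boundary Poincar\'e inequality handles the normalization by $\bar u$. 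The main obstacle in both parts is the scale-critical nonlinearity, and the whole scheme works only because the Gagliardo--Nirenberg interpolations above produce an explicit $\epsilon_1^\delta$ factor on every nonlinear term. The quartic $|\nabla u|^4$ is the most delicate, and on the half-ball one must additionally arrange the cutoff to be supported away from the curved portion of $\partial B_1^+$, so that the commutator terms never see the boundary and only the Dirichlet face $\partial^0 B_1^+$ contributes.
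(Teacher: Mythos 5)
Your overall scheme (cutoff, Calder\'on--Zygmund on the bi-Laplacian, Gagliardo--Nirenberg to extract the small-energy prefactor, absorb, bootstrap) is the right kind of argument, and the interpolation inequalities you invoke for the three parts of $f(u)$ are correct in dimension four. However, the decision to begin with the $W^{4,2}$ estimate and then ``bootstrap to general $p>1$'' creates a genuine gap for the subcritical range $1<p<2$. The hypothesis gives only $u\in W^{4,p}(B_1)$ and $\tau_2(u)\in L^p(B_1)$; when $p<2$ neither membership improves automatically, so the $L^2$ Calder\'on--Zygmund estimate you want to apply to $\eta^m(u-\bar u)$ is not available at the start. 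Concretely, with $u\in W^{4,p}$ for $p$ close to $1$, $\nabla^3 u$ only lies in $L^{4p/(4-p)}$, and the product $\nabla^3 u\,\nabla u$ lands back in $L^p$ rather than in $L^2$, so you cannot close the $L^2$ loop you propose; the smallness of $\epsilon_1$ must be used at the level of $L^p$ itself. The paper handles precisely this by first running the absorption argument for $1<p<4/3$ (where the Sobolev exponents $\tfrac{4p}{4-p}$, $\tfrac{4p}{4-2p}$, $\tfrac{4p}{4-3p}$ are all finite), using a family of cutoffs indexed by $\sigma$ and the weighted quantities $\Psi_j(p)=\sup_\sigma(1-\sigma)^j\|\nabla^j u\|_{L^p(B^+_\sigma)}$ together with the interpolation $\Psi_j\le\epsilon^{4-j}\Psi_4+C\epsilon^{-j}\Psi_0$ to control the commutator terms, and only then upgrading: for $p\ge 4/3$ it first applies the estimate with $p=16/13$ (legitimate since $W^{4,p}\subset W^{4,16/13}$ on a bounded domain), extracts $\nabla u\in L^{16}$, $\nabla^2 u\in L^{16/5}$, $\nabla^3 u\in L^{16/9}$, and iterates upward. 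Your argument should be reorganized in this direction -- start the absorption at small $p$, not at $p=2$ -- to cover the full hypothesis.

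A secondary but genuine difference: for part (ii) you reduce to homogeneous Dirichlet data by subtracting a biharmonic extension $\Phi$ of $(g,h)$, whereas the paper invokes the boundary $L^p$ theory for $\Delta^2$ with inhomogeneous Dirichlet data directly (following Hong--Yin) and keeps $\|\varphi g\|_{W^{4,p}}$, $\|\varphi h+\partial_{\vec n}\varphi\,g\|_{W^{3,p}}$ on the right-hand side. Both are legitimate; your route is cleaner in that the boundary data disappear from the PDE, at the cost of the nonlinearity $f(u)=f(v+\Phi)$ acquiring $\Phi$-dependent cross terms which you would need to estimate (they are lower-order and controlled by $\|\Phi\|_{W^{4,p}}$, but ``applies verbatim'' overstates it slightly). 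Your remark about keeping the cutoff away from the curved boundary of $B_1^+$ is sound and is implicitly enforced in the paper by taking $\varphi\in C_0^\infty(B^+_{\sigma'})$.
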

\begin{proof}
Here we use the idea of \cite{LY} to give the proof of boundary estimate stated in (ii),
and leave the interior estimate in (i) for interested readers since it is similar to (ii)  and easier to obtain.

For convenience, assume $\bar{u}=0$. Since $u$ satisfies the Euler-Lagrange equation:
\begin{eqnarray*}
\triangle^2u=\nabla^3u\#\nabla u+\nabla^2 u\#\nabla^2 u+\nabla^2 u\#\nabla u\#\nabla u+\nabla u\#\nabla u\#\nabla u\#\nabla u+\tau_2(u).
\end{eqnarray*}
Here $\#$ denotes some 'product' for which we are only interested in the properties such as
\begin{equation*}
	|a\# b|\leq C |a| |b|.
\end{equation*}
For $0<\sigma<1$ and $\sigma'=\frac{1+\sigma}{2}$, let $\varphi\in C_0^\infty(B^+_{\sigma'})$ be a cut-off function, satisfying $\varphi\equiv1$ in $B^+_\sigma$ and $|\nabla^j\varphi|\leq\frac{4^j}{(1-\sigma)^j}$
for $j=1,2,3,4$.
Direct computations show that
\begin{align*}
\triangle^2(\varphi u)&=\triangle (\varphi\triangle u+2\nabla u \nabla\varphi+u\triangle \varphi)\\
&=
\varphi\triangle ^2u+4\nabla\triangle u\nabla\varphi+2\triangle u\triangle \varphi+4\nabla^2u\nabla^2\varphi+4\nabla u\nabla\triangle \varphi+u\triangle ^2\varphi\\
&=
(\nabla^3u\#\nabla u+\nabla^2 u\#\nabla^2 u+\nabla^2 u\#\nabla u\#\nabla u+\nabla u\#\nabla u\#\nabla u\#\nabla u+\tau_2(u))\varphi\\
&\quad+\nabla^3 u\#\nabla\varphi+\nabla^2u\#\nabla^2\varphi+\nabla u\#\nabla^3\varphi+u\nabla^4\varphi\\
&=
(\nabla^3(\varphi u)\#\nabla u+\nabla^2 (\varphi u)\#\nabla^2 u+\nabla^2 u\#\nabla u\#\nabla (\varphi u)+\nabla u\#\nabla u\#\nabla u\#\nabla (\varphi u))\\
&\quad+\nabla^3 u\#\nabla\varphi+\nabla^2u\#\nabla^2\varphi+\nabla u\#\nabla^3\varphi+u\nabla^4\varphi
+\nabla^2u\#\nabla u\#\nabla\varphi+\nabla^2\varphi\#\nabla u\#\nabla u\\
&\quad+\nabla u\#\nabla u\#\nabla u\#\nabla\varphi+\varphi\tau_2(u).
\end{align*}
Assume first that $1<p<\frac{4}{3}$. Observe that
$$\varphi u=\varphi g, \ \frac{\partial(\varphi u)}{\partial \vec{n}}=\varphi h+\frac{\partial\varphi}{\partial\vec{n}}g
\ \ {\rm{on}}\ \ \partial^0 B_1^+.$$
By the standard $L^p$ theory (cf. \cite{Hong-Yin}), we have
\begin{eqnarray*}
&&\|\nabla^4(\varphi u)\|_{L^p(B_1^+)}\le\\
&& C\Big(\|\nabla u\|_{L^4(B_1^+)}\|\nabla^3(\varphi u)\|_{L^{\frac{4p}{4-p}}(B_1^+)}
+\|\nabla^2 u\|_{L^2(B_1^+)}\|\nabla^2(\varphi u)\|_{L^{\frac{4p}{4-2p}}(B_1^+)}\\
&&+\|\nabla^2 u\|_{L^2(B_1^+)}\|\nabla u\|_{L^4(B_1^+)}\|\nabla(\varphi u)\|_{L^{\frac{4p}{4-3p}}(B_1^+)}
+\|\nabla u\|^3_{L^4(B_1^+)}\|\nabla(\varphi u)\|_{L^{\frac{4p}{4-3p}}(B_1^+)} \\
&&+\frac{\|\nabla^3 u\|_{L^p(B_{\sigma'}^+)}}{1-\sigma}+\frac{\|\nabla^2 u\|_{L^p(B_{\sigma'}^+)}}{(1-\sigma)^2}
+\frac{\|\nabla u\|_{L^p(B_{\sigma'}^+)}}{(1-\sigma)^3}\\
&&+\frac{\|u\|_{L^p(B_{\sigma'}^+)}}{(1-\sigma)^4}
+\frac{\|\nabla^2 u\#\nabla u\|_{L^p(B_{\sigma'}^+)}}{1-\sigma}+\frac{\|\nabla u\#\nabla u\|_{L^p(B_{\sigma'}^+)}}{(1-\sigma)^2}\\
&&+\frac{1}{1-\sigma}\|\nabla u\#\nabla u\#\nabla u\|_{L^p(B_{\sigma'}^+)}+\|\varphi\tau_2(u)\|_{L^p(B_1^+)}\\
&&+\|\varphi g\|_{W^{4,p}(\partial^0 B_1^+)}+\|\varphi h+\frac{\partial\varphi}{\partial\vec{n}} g\|_{W^{3,p}(\partial^0
B_1^+)} \Big).
\end{eqnarray*}
By the Sobolev embedding, if $\epsilon_1$ is chosen to be sufficiently small, then we get
\begin{eqnarray*}
&&\|\nabla^4(\varphi u)\|_{L^p(B_1^+)}\le\\
&& C\Big(
\frac{1}{1-\sigma}\|\nabla^3 u\|_{L^p(B_{\sigma'}^+)}+\frac{1}{(1-\sigma)^2}\|\nabla^2 u\|_{L^p(B_{\sigma'}^+)}
+\frac{1}{(1-\sigma)^3}\|\nabla u\|_{L^p(B_{\sigma'}^+)}\\
&&+\frac{1}{(1-\sigma)^4}\|u\|_{L^p(B_{\sigma'}^+)}
+\frac{1}{1-\sigma}\|\nabla^2 u\#\nabla u\|_{L^p(B_{\sigma'}^+)}+\frac{1}{(1-\sigma)^2}\|\nabla u\#\nabla u\|_{L^p(B_{\sigma'}^+)}\\
&&+\frac{1}{1-\sigma}\|\nabla u\#\nabla u\#\nabla u\|_{L^p(B_{\sigma'}^+)}+\|\varphi\tau_2(u)\|_{L^p(B_1^+)}\\
&&+\|\varphi g\|_{W^{4,p}(\partial^0B_1^+)}+\|\varphi h\|_{W^{3,p}(\partial^0 B_1^+)}
+\|\frac{\partial\varphi}{\partial\vec{n}} g\|_{W^{3,p}(\partial^0
B_1^+)}\Big).
\end{eqnarray*}
Setting
\[
\Psi_j(p)=\sup_{0\leq\sigma\leq1}(1-\sigma)^j\|\nabla^j u\|_{L^p(B^+_{\sigma})},
\]
and noticing that $1-\sigma=2(1-\sigma')$, $1<p<\frac{4}{3}$, we have
\begin{align*}
&\Psi_4(p)\\
&\leq C\Big(\sum_{j=0}^3\Psi_j(p)+\|\nabla^2 u\#\nabla u\|_{L^p(B_1^+)}+\|\nabla u\#\nabla u\|_{L^p(B_1^+)}\\ &\quad +\|\nabla u\#\nabla u\#\nabla u\|_{L^p(B_1^+)} +\|\varphi\tau_2(u)\|_{L^p(B_1^+)}\\
&\quad+\sup_{0\leq\sigma\leq1}(1-\sigma)^4\big[
\|\varphi g\|_{W^{4,p}(\partial^0B_1^+)}+\|\varphi h\|_{W^{3,p}(\partial^0B_1^+)}
+\|\frac{\partial\varphi}{\partial\vec{n}} g\|_{W^{3,p}(\partial^0 B_1^+)}\big]\Big)\\
&\leq
C\Big(\sum_{j=1}^3\Psi_j(p)+
\|\nabla^2u\|_{L^2(B_1^+)}+\|\nabla u\|_{L^4(B_1^+)}+\|\tau_2(u)\|_{L^p(B_1^+)}\\
&\quad+\|g\|_{W^{4,p}(\partial^0B_1^+)}+\|h\|_{W^{3,p}(\partial^0 B_1^+)}\Big).
\end{align*}
Using the interpolation inequality (see \cite{LY})
\begin{equation*}
	\Psi_j(p)\leq \epsilon^{4-j} \Psi_4(p) +C\epsilon^{-j} \Psi_0(p), \ j=1,2,3,\ \epsilon>0,
\end{equation*}
we get, by choosing sufficiently small $\epsilon>0$,
\begin{eqnarray}\label{4p-estimate}
\Psi_4(p)&\leq&
C\Big(\Psi_0(p)+\|\nabla^2u\|_{L^2(B_1^+)}+\|\nabla u\|_{L^4(B_1^+)}+\|\tau_2(u)\|_{L^p(B_1^+)}\nonumber\\
&&\quad+\| g\|_{W^{4,2}(\partial^0B_1^+)}+\|h\|_{W^{3,2}(\partial^0 B_1^+)}\Big)\nonumber\\
&\leq&
C\Big(\|\nabla^2u\|_{L^2(B_1^+)}+\|\nabla u\|_{L^4(B_1^+)}+\|\tau_2(u)\|_{L^p(B_1^+)}\nonumber\\
&&\quad+\| g\|_{W^{4,2}(\partial^0B_1^+)}+\|h\|_{W^{3,2}(\partial^0 B_1^+)}\Big),
\end{eqnarray}
where we have used the Poincar\'e inequality in the last step.

If $p\geq \frac{4}{3}$, we start by applying (\ref{4p-estimate}) with $p=\frac{16}{13}$ so that
\begin{eqnarray*}
	\|u\|_{W^{4,\frac{16}{13}}(B^+_{7/8})}&\leq& C \Big(\|\nabla^2 u\|_{L^2(B^+_1)} +\|\nabla u\|_{L^4(B^+_1)}+\|\tau_2(u)\|_{L^p(B_1^+)}\\
&&+\|g\|_{W^{4,2}(\partial^0B_1^+)}+\|h\|_{W^{3,2}(\partial^0 B_1^+)}\Big).
\end{eqnarray*}
This, combined with the Sobolev embedding theorem, implies that
\begin{align*}
	&\|\nabla^3 u\|_{L^{\frac{16}{9}}(B^+_{7/8})} + \|\nabla^2 u\|_{L^{\frac{16}{5}}(B^+_{7/8})} +\|\nabla u\|_{L^{16}(B^+_{7/8})}\\
&\leq C \Big(\|\nabla^2 u\|_{L^2(B^+_1)} +\|\nabla u\|_{L^4(B^+_1)}+\|\tau_2(u)\|_{L^p(B_1^+)}
+\|g\|_{W^{4,2}(\partial^0B_1^+)}+\|h\|_{W^{3,2}(\partial^0B_1^+)}\Big).
\end{align*}
With this estimate, we can bound the $L^{\min\{\frac{8}{5},p\}}$-norm of the right hand side of the Euler-Lagrange equation of $u$. The interior $L^p$-estimate together (\ref{4p-estimate}) show
that $u$ is bounded in $W^{4,{\min\{\frac{8}{5},p\}}}(B^+_{3/4})$. The lemma can be
finally proved by applying the standard bootstrapping method.
\end{proof}

As a direct corollary of the above theorem, we can get the following gap theorem.
\begin{thm}[Gap-phenomena]\label{thm:Gap-theorem}
Suppose either $u\in C^\infty(\R^4,N)$ is a biharmonic map or $u\in C^\infty(\R_+^4,N)$ is a biharmonic map with the Dirichlet boundary condition:
$$u|_{\partial \R_+^4}={\rm{constant}}\quad {\rm{and}} \quad \frac{\partial u}{\partial\vec{n}}|_{\partial \R_+^4}=0.$$
Then there exists a universal constant $\epsilon_0>0$ such that if either
$$\int_{\R^4}|\Delta u|^2dx\leq \epsilon_0^2\ {\rm{or}} \ \int_{\R_+^4}|\Delta u|^2dx\leq \epsilon_0^2,$$ then $u$ is a constant map.
\end{thm}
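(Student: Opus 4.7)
The plan is to combine the $\epsilon_1$-regularity of Theorem~\ref{thm:small-regularity} with a scaling argument so as to show $\nabla u\equiv 0$ under the global smallness hypothesis. Fix an arbitrary base point $x_0$ in the domain, and for each $r>0$ consider the rescaled map $u_r(x):=u(x_0+rx)$; in the half-space case I instead center at the projection of $x_0$ onto $\partial\R^4_+$, so that $u_r$ is naturally defined on $B_1^+$ with boundary on $\partial^0 B_1^+$. Biharmonicity and the Dirichlet data are preserved under this rescaling, and one has $E(u_r;B_1)=E(u;B_r(x_0))$. It therefore suffices to verify (i) that $E(u_r;B_1)\leq\epsilon_1$ uniformly in $r$, and (ii) that Theorem~\ref{thm:small-regularity} then delivers an $r$-independent $L^\infty$ bound for $\nabla u_r$ on $B_{1/2}$; scaling back will give $|\nabla u(x_0)|\leq C/r$, and letting $r\to\infty$ yields $\nabla u(x_0)=0$.

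For step (i), on $\R^4$ an integration by parts gives $\int|\nabla^2 u|^2=\int|\Delta u|^2\leq\epsilon_0^2$, and the critical Sobolev embedding $\dot W^{1,2}(\R^4)\hookrightarrow L^4(\R^4)$ applied to $\nabla u$ yields $\|\nabla u\|_{L^4(\R^4)}\leq C\|\nabla^2 u\|_{L^2(\R^4)}\leq C\epsilon_0$. In the half-space case I use $u|_{\partial\R^4_+}\equiv c$ and $\partial_\nu u|_{\partial\R^4_+}=0$ to extend $u-c$ by even reflection to a $W^{2,2}$ function on the whole of $\R^4$, reducing to the previous estimate. Localizing, $E(u;B_r(x_0))\leq C\epsilon_0^2$ for every $r>0$ and every $x_0$, so choosing $\epsilon_0$ with $C\epsilon_0^2\leq\epsilon_1$ places $u_r$ in the scope of Theorem~\ref{thm:small-regularity}.

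For step (ii), $\tau_2(u_r)\equiv 0$ since $u_r$ is biharmonic, and in the boundary case $h\equiv 0$ and $g\equiv c$, so $\|g\|_{W^{4,2}(\partial^0 B_1^+)}=|c|\cdot|\partial^0 B_1^+|^{1/2}$ is independent of $r$. Invoking Theorem~\ref{thm:small-regularity} with some $p>4$,
\[
\|u_r-\bar{u_r}\|_{W^{4,p}(B_{1/2})}\leq C\bigl(\epsilon_0+|c|\bigr),
\]
and the Sobolev embedding $W^{4,p}\hookrightarrow C^3$ converts this to $\|\nabla u_r\|_{L^\infty(B_{1/2})}\leq C$ uniformly in $r$. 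Rescaling, $|\nabla u(x_0)|=r^{-1}|\nabla u_r(0)|\leq C/r\to 0$, so $\nabla u\equiv 0$ and $u$ must be constant.

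The subtle point is the constant boundary datum $c$: under rescaling $c$ does not become small, so the output of the $\epsilon_1$-regularity for $u_r$ is merely bounded, not small. This is nevertheless sufficient, because the factor $r^{-1}$ coming from the scaling of $\nabla$ already supplies the decay needed to conclude $\nabla u(x_0)=0$ in the limit $r\to\infty$; smallness of $\nabla u_r$ itself is not required.
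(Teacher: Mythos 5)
Your proof is correct and follows essentially the same route as the paper: show that $E(u;B_R)$ (resp.\ $E(u;B_R^+)$) stays below $\epsilon_1$ uniformly in $R$, feed this into the $\epsilon_1$-regularity Theorem~\ref{thm:small-regularity} to obtain a bound of the form $R\|\nabla u\|_{L^\infty(B_{R/2})}\le C$, and let $R\to\infty$. The only cosmetic differences are that you phrase the last step via the rescaled maps $u_r$ rather than writing the $R$-weighted estimate directly, and that you obtain the uniform energy smallness by even reflection plus the global Sobolev embedding on $\R^4$, whereas the paper uses a local Poincar\'e inequality on $B_{2R}^+$ (exploiting that $\nabla u$ has zero trace on $\partial^0 B_{2R}^+$) followed by a local elliptic estimate; both give the same bound $E(u;B_R^+)\le C\epsilon_0^2$. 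Your closing remark correctly identifies that the constant boundary datum $c$ produces only a bounded, not small, output from Theorem~\ref{thm:small-regularity}, and that the $r^{-1}$ scaling factor is what drives the conclusion.
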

\begin{proof}
For simplicity, we only prove the upper half space case, since the proof of $u\in C^\infty(\R^4,N)$ is similar.
By Poincar\'e's inequality and integration by parts, we have that for any $R>0$, it holds
$$\frac{1}{4R^2}\int_{B_{2R}^+}|\nabla u|^2\,dx\leq C\int_{B_{2R}^+}|\nabla^2 u|^2\,dx
\leq C\int_{\R^4_+}|\nabla^2 u|^2\,dx= C\int_{\R^4_+}|\Delta u|^2\,dx
\leq C\epsilon_0^2.$$
Hence, by the standard elliptic estimates and Sobolev's embedding, we have
$$\int_{B_R^+}|\nabla^2 u|^2\,dx+\big(\int_{B_R^+}|\nabla u|^4\,dx\big)^{\frac{1}{2}}
\leq C\big[\int_{B_{2R}^+}|\Delta u|^2\,dx+\frac{1}{R^2}\int_{B_{2R}^+}|\nabla u|^2\,dx\big]
\leq C\epsilon_0^2.$$
Choosing $\epsilon_0<<\epsilon_1$ and applying both Theorem \ref{thm:small-regularity} and
the Sobolev embedding, we have that for any $R>0$, there holds
\begin{align*}
R\|\nabla u\|_{L^\infty(B^+_R)}\leq C\big(\|\nabla^2 u\|_{L^2(B^+_{2R})} +\|\nabla u\|_{L^4(B^+_{2R})}\big)
\leq C.
\end{align*}
Sending $R$ to infinity yields that $u$ is a constant map.
\end{proof}

In the following we will prove several interpolation type inequalities,
which will be used through the remaining sections.

\begin{lem}\label{Lem:interpolation}
For any $u\in W^{4,2}(M,N)$, we have
\begin{align}
\int_{B^M_R}|\nabla^3u|^2\,dx&\leq C R^2\int_{B^M_R}|\nabla^4u|^2\,dx
+\frac{C}{ R^2}\int_{B^M_R}|\nabla^2u|^2\,dx,\label{ine4}
\\
\big(\int_{B^M_R}|\nabla^3u|^4\,dx\big)^\frac{1}{2}
&\leq C\big(\int_{B^M_R}|\nabla^4u|^2\,dx+\frac{C}{R^4}\int_{B^M_R}|\nabla^2u|^2\,dx\big),\label{ine5}
\\
\int_{B^M_R}|\nabla^2u|^4\,dx&\leq C\int_{B^M_R}|\nabla^2u|^2\,dx
\big(\int_{B^M_R}|\nabla^4u|^2\,dx+\frac{1}{R^4}\int_{B^M_R}|\nabla^2u|^2\,dx\big),\label{ine6}
\\
\int_{B^M_R}|\nabla u|^8\,dx&\leq C\int_{B^M_R}|\nabla u|^4\,dx\big[\int_{B^M_R}|\nabla^2u|^2\,dx
\big(\int_{B^M_R}|\nabla^4u|^2\,dx+\frac{1}{R^4}\int_{B^M_R}|\nabla^2u|^2\,dx\big)\notag\\
&\quad+\frac{1}{R^4}\int_{B^M_R}|\nabla u|^4\,dx\big].\label{ine7}
\end{align}
\end{lem}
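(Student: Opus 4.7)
The plan is to observe that each of the four inequalities \eqref{ine4}--\eqref{ine7} is scale invariant under the dilation $v(x):=u(Rx)$ on $B_1\subset\R^4$: tracking the factors of $R$ coming from $dx\mapsto R^4\,dx$ and $\nabla^k\mapsto R^k\nabla^k$, one checks that both sides of each inequality carry the same power of $R$. For $R<\tfrac14\mathrm{inj}_M$ one may work in normal coordinates centered at the base point, in which the geodesic ball $B^M_R$ is uniformly bi-Lipschitz to the Euclidean ball of radius $R$ and the intrinsic derivative norms are comparable to their Euclidean counterparts with constants independent of $R$. Hence it suffices to prove the four inequalities on the unit Euclidean ball $B_1$, after which rescaling reinstates the $R^{-2}$ and $R^{-4}$ weights in the stated lower-order terms.

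For \eqref{ine4}, I would apply the standard bounded-domain Gagliardo--Nirenberg interpolation componentwise to $v=\nabla^2u$,
\begin{equation*}
\|\nabla v\|_{L^2(B_1)}\leq C\|v\|_{L^2(B_1)}^{1/2}\|v\|_{H^2(B_1)}^{1/2}
\leq C\|v\|_{L^2(B_1)}^{1/2}\bigl(\|\nabla^2 v\|_{L^2(B_1)}^{1/2}+\|v\|_{L^2(B_1)}^{1/2}\bigr),
\end{equation*}
then use Young's inequality $2ab\leq a^2+b^2$ to convert products into sums of squares. For \eqref{ine5}, I would combine the critical Sobolev embedding $W^{1,2}(B_1)\hookrightarrow L^4(B_1)$ (sharp because $n=4$), applied to $\nabla^3u$,
\begin{equation*}
\|\nabla^3 u\|_{L^4(B_1)}\leq C\bigl(\|\nabla^4 u\|_{L^2(B_1)}+\|\nabla^3 u\|_{L^2(B_1)}\bigr),
\end{equation*}
with the just-proved \eqref{ine4} to control $\|\nabla^3 u\|_{L^2(B_1)}$, and square.

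For \eqref{ine6}, I would apply the four-dimensional Gagliardo--Nirenberg--Ladyzhenskaya estimate $\|v\|_{L^4(B_1)}^2\leq C\|v\|_{L^2(B_1)}\bigl(\|\nabla^2 v\|_{L^2(B_1)}+\|v\|_{L^2(B_1)}\bigr)$ to each component $\partial_{ij}u$, sum, and square. For \eqref{ine7}, I would apply the embedding $W^{1,2}(B_1)\hookrightarrow L^4(B_1)$ to the scalar function $w=|\nabla u|^2$. Kato's inequality $|\nabla w|\leq 2|\nabla u|\,|\nabla^2 u|$ together with Cauchy--Schwarz gives
\begin{equation*}
\|\nabla u\|_{L^8(B_1)}^2=\|w\|_{L^4(B_1)}\leq C\bigl(\|\nabla u\|_{L^4(B_1)}\|\nabla^2 u\|_{L^4(B_1)}+\|\nabla u\|_{L^4(B_1)}^2\bigr);
\end{equation*}
squaring this and invoking \eqref{ine6} to estimate $\|\nabla^2 u\|_{L^4(B_1)}^4$ produces \eqref{ine7}. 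A final rescaling from $B_1$ back to $B^M_R$ completes the proof.

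The main difficulty is of the bookkeeping variety rather than conceptual: at every step one must apply the Gagliardo--Nirenberg or Sobolev inequality in a form whose ``tail'' is a norm of a \emph{derivative} of $u$, so that after rescaling it produces exactly the $R^{-k}$-weighted intrinsic quantities appearing in \eqref{ine4}--\eqref{ine7}, rather than a bare norm of $u$ itself, which would rescale incorrectly and corrupt the claimed inequality.
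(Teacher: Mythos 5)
Your proposal is correct and uses essentially the paper's blueprint for \eqref{ine4}, \eqref{ine5}, and \eqref{ine7} — citing (or deriving) the second-order interpolation for \eqref{ine4}, $W^{1,2}\hookrightarrow L^4$ on $\nabla^3 u$ plus \eqref{ine4} for \eqref{ine5}, and $W^{1,2}\hookrightarrow L^4$ on $|\nabla u|^2$ plus Cauchy--Schwarz plus \eqref{ine6} for \eqref{ine7}. The one place where you depart meaningfully from the paper is \eqref{ine6}: the paper applies the Sobolev embedding $W^{1,4/3}(B_R)\hookrightarrow L^2(B_R)$ to the scalar $|\nabla^2u|^2$, then H\"older, then \emph{absorbs} $\tfrac12\int|\nabla^2u|^4$ into the left-hand side, and finally invokes \eqref{ine5}. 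You instead apply the Ladyzhenskaya-type estimate $\|v\|_{L^4}^2\le C\|v\|_{L^2}\bigl(\|\nabla^2v\|_{L^2}+\|v\|_{L^2}\bigr)$ directly to $v=\nabla^2 u$ and square. Your route is a little cleaner: it avoids the absorption step, does not need \eqref{ine5} as an intermediary, and proves \eqref{ine6} in one shot. The paper's route has the minor advantage of working with scalar quantities only, so the elementary $W^{1,4/3}\hookrightarrow L^2$ embedding suffices, at the cost of the absorption and an extra dependency. Both are valid; the choice is stylistic.

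Two small points of hygiene in your write-up. First, in the final step for \eqref{ine7} you say ``squaring this \dots produces \eqref{ine7},'' but since $\|\nabla u\|_{L^8}^2=\|w\|_{L^4}$, one squaring only reaches $\|\nabla u\|_{L^8}^4=\bigl(\int|\nabla u|^8\bigr)^{1/2}$ and the factor $\|\nabla^2u\|_{L^4}^2$, not $\|\nabla^2u\|_{L^4}^4$; you need to raise to the fourth power (square twice), after which $\|\nabla^2u\|_{L^4}^4$ appears and \eqref{ine6} applies. Second, the inequality $|\nabla|\nabla u|^2|\le 2|\nabla u||\nabla^2u|$ is simply the chain rule plus Cauchy--Schwarz, not Kato's inequality in the usual sense (Kato refers to $|\nabla|u||\le|\nabla u|$); the estimate you use is fine, only the attribution is off. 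Neither affects the correctness of the argument.
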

\proof \eqref{ine4} is a standard interpolation inequality (cf. \cite{GT}, page 173).
By the Sobolev embedding $W^{1,2}\hookrightarrow L^4$ on $B^M_R$  we get
$$\big(\int_{B^M_R}|\nabla^3u|^4\,dx\big)^{\frac{1}{2}}
\leq C\big(\int_{B^M_R}|\nabla^4u|^2\,dx+\frac{1}{R^2}\int_{B^M_R}|\nabla^3u|^2\,dx\big),$$
then \eqref{ine5} is a consequence of \eqref{ine4}.
By Sobolev embedding $W^{1,\frac{4}{3}}\hookrightarrow L^2$, we have
\begin{align*}
\int_{B^M_R}|\nabla^2u|^4\,dx &\leq \frac{C}{R^2}\||\nabla^2u|^2\|^2_{L^{\frac{4}{3}}(B^M_R)}
+C\|\nabla^2u\#\nabla^3u\|^2_{L^{\frac{4}{3}}(B^M_R)}\\
&\leq \frac{C}{R^2}\|\nabla^2u\|^2_{L^{2}(B^M_R)}\|\nabla^2u\|^2_{L^{4}(B^M_R)}
+C\|\nabla^2u\|^2_{L^{2}(B^M_R)}\|\nabla^3u\|^2_{L^{4}(B^M_R)}\\
&\leq \frac{1}{2}\int_{B^M_R}|\nabla^2u|^4\,dx+ \frac{C}{R^4}\|\nabla^2u\|^4_{L^{2}(B^M_R)}
+C\|\nabla^2u\|^2_{L^{2}(B^M_R)}\|\nabla^3u\|^2_{L^{4}(B^M_R)}\\
&\leq \frac{1}{2}\int_{B^M_R}|\nabla^2u|^4\,dx
+ C\int_{B^M_R}|\nabla^2u|^2\,dx\big[(\int_{B^M_R}|\nabla^3u|^4\,dx)^\frac{1}{2}
+\frac{1}{R^4}\int_{B^M_R}|\nabla^2u|^2\,dx\big]\\
&\leq \frac{1}{2}\int_{B^M_R}|\nabla^2u|^4\,dx+ C\int_{B^M_R}|\nabla^2u|^2\,dx
\big(\int_{B^M_R}|\nabla^4u|^2\,dx+\frac{1}{R^4}\int_{B^M_R}|\nabla^2u|^2\,dx\big),
\end{align*}
which implies \eqref{ine6}.

By Sobolev embedding $W^{1,2}\hookrightarrow L^4$, we have
\begin{align*}
\int_{B^M_R}|\nabla u|^8\,dx
&\leq C\frac{1}{R^4}\big(\int_{B^M_R}|\nabla u|^4\,dx\big)^2+C\big(\int_{B^M_R}|\nabla u|^2|\nabla^2 u|^2\,dx\big)^2
\\&\leq C\int_{B^M_R}|\nabla u|^4\,dx\big(\int_{B^M_R}|\nabla^2u|^4\,dx+\frac{1}{R^4}\int_{B^M_R}|\nabla u|^4\,dx\big)
\\&\leq C\int_{B^M_R}|\nabla u|^4\,dx\big(\int_{B^M_R}|\nabla^2u|^2\,dx
(\int_{B^M_R}|\nabla^4u|^2\,dx+\frac{1}{R^4}\int_{B^M_R}|\nabla^2u|^2\,dx\big)\\
&\quad+\frac{1}{R^4}\int_{B^M_R}|\nabla u|^4\,dx\big),
\end{align*}
which implies \eqref{ine7}, here in the last inequality we used \eqref{ine6}.
\endproof
\section{A priori estimates}

In this section we will show some properties of the flow and some a priori estimates, including the monotonicity of the energy $F_2(u)$ and small energy regularity theorem of parabolic case, which will be needed in the next section for the existence result.

From now on, we will use $\eta$ as a smooth cut off function satisfying the following properties:
\begin{eqnarray}\label{inecf}
&&\eta \in C^\infty(M),\ 0\leq\eta\leq 1,\ \eta\equiv1 ~on~B^M_R(x_0), ~\eta\equiv0 ~{\rm{on}}~M\setminus B^M_{2R}(x_0),\nonumber
\\&&\|\nabla^j\eta\|_{L^\infty}\leq\frac{C}{R^j} \ (j=1,2),\label{ine3}
\end{eqnarray}
where $x_0\in M$ and $0<R<\frac{1}{4}{\rm{inj}}_M$.

\begin{lem}\label{lem:01}
Let $u\in V(M^T)$ be a solution of \eqref{equ1}-\eqref{equ2}. Then for all $t\in[0,T)$, we have
\begin{align}
F_2(u(t))+2\int_{M^t}|\partial_tu|^2\,dv_gdt&=F_2(u_0),\label{ine1}
\\
\int_M|\nabla^2u|^2\,dv_g+\big(\int_M|\nabla u|^4\,dv_g\big)^\frac{1}{2}
&\leq C\big(F_2(u_0)+\|g\|^2_{W^{2,2}(M,N)}\big). \label{ine2}
\end{align}
Moreover, $F_2(u(t))$ is absolutely continuous in $[0,T)$ and monotonically non-increasing.
\end{lem}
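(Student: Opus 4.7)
The plan is to test the flow equation \eqref{equ1} against $\partial_t u$, exploit the geometric orthogonality $f(u) \perp T_u N$ to discard the nonlinear right-hand side, and integrate by parts in space to produce $\tfrac12\tfrac{d}{dt} F_2(u)$; this gives \eqref{ine1}. Bound \eqref{ine2} then follows from the resulting energy inequality combined with standard elliptic $L^2$-regularity for the Dirichlet problem, and absolute continuity and monotonic non-increase of $F_2(u(\cdot))$ are read off directly from the differential form of \eqref{ine1}.

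For \eqref{ine1}: since $u(x,t)\in N$ we have $\partial_t u \in T_u N$, whereas the nonlinearity $f(u)$ in \eqref{E-L} is a sum of scalar multiples of the normal frame $\{\nu_i\circ u\}$, hence $f(u)\in (T_u N)^\perp$. Thus $\langle f(u), \partial_t u\rangle \equiv 0$. Pairing $\partial_t u + \Delta^2 u = -f(u)$ with $\partial_t u$ and integrating over $M$ gives
\begin{equation*}
\int_M |\partial_t u|^2\,dv_g + \int_M \partial_t u \cdot \Delta^2 u\,dv_g = 0.
\end{equation*}
Two applications of Green's identity convert the second integral into $\int_M \Delta(\partial_t u)\cdot \Delta u\,dv_g = \tfrac12\tfrac{d}{dt} F_2(u(t))$; the boundary contributions vanish at each step because the time-independence of $u|_{\partial M}=g$ and $\partial_\nu u|_{\partial M}=h$ forces $\partial_t u = 0$ and $\partial_\nu(\partial_t u)=0$ on $\partial M$. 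Integrating in $t$ produces \eqref{ine1}; equivalently $\tfrac{d}{dt} F_2(u(t)) = -2\|\partial_t u(t)\|_{L^2(M)}^2 \leq 0$, delivering both absolute continuity and monotonic non-increase.

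For \eqref{ine2}: from \eqref{ine1} we have $F_2(u(t))\leq F_2(u_0)$. Since $u(t)$ is a $W^{2,2}$-solution of the Dirichlet problem for the Laplacian with boundary value $g$, standard elliptic $L^2$-regularity yields
\begin{equation*}
\|u(t)\|_{W^{2,2}(M)} \leq C\big(\|\Delta u(t)\|_{L^2(M)} + \|g\|_{W^{3/2,2}(\partial M)}\big) \leq C\big(F_2(u_0)^{1/2} + \|g\|_{W^{2,2}(M,N)}\big),
\end{equation*}
using the trace inequality $\|g\|_{W^{3/2,2}(\partial M)}\leq C\|g\|_{W^{2,2}(M,N)}$. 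Squaring provides the $\|\nabla^2 u\|_{L^2(M)}^2$ bound, while the Sobolev embedding $W^{2,2}(M)\hookrightarrow W^{1,4}(M)$ available in $\dim M = 4$ converts it into the $\|\nabla u\|_{L^4(M)}^2$ bound. Adding yields \eqref{ine2}.

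The main technical obstacle is justifying the integration by parts and the chain rule $\tfrac{d}{dt}\int_M|\Delta u|^2$ rigorously at the regularity of $V(M^T)$. The bounds $\nabla^4 u \in L^2(M^T)$ and $\partial_t u \in L^2(M^T)$ make all pairings pointwise in $t$ well-defined, and the time-independence of $g$ and $h$ permits smooth approximations sharing these boundary traces so that Green's identity passes to the limit; consequently \eqref{ine1} remains valid in the $V(M^T)$-class and the rest is formal.
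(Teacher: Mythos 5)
Your proof is correct and takes essentially the same route as the paper: test against $\partial_t u$, integrate by parts using the time-independence of the Dirichlet data to kill boundary terms, then apply elliptic $L^2$-regularity and Sobolev embedding for \eqref{ine2}. One small improvement over the paper's exposition is that you make explicit the orthogonality argument ($f(u)\perp T_uN$ while $\partial_t u\in T_uN$, so $\langle f(u),\partial_t u\rangle\equiv0$), which the paper uses silently when it writes $0=\int_{M^t}|\partial_t u|^2+\int_{M^t}\Delta^2 u\,\partial_t u$.
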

\proof Multiplying the equation \eqref{equ1} by $\partial_t u$ and integrating by parts, we have
\begin{align*}
0&=\int_{M^t}|\partial_tu|^2\,dv_gdt+\int_{M^t}\Delta^2 u\partial_tu\,dv_gdt\\
&=\int_{M^t}|\partial_tu|^2\,dv_gdt+\int_{M^t}\Delta u\partial_t\Delta u\,dv_gdt
+\int_{0}^t\int_{\partial M}\partial_\nu\Delta u\partial_t u-\int_{0}^t\int_{\partial M}\Delta u\partial_t\partial_\nu u\\
&=\int_{M^t}|\partial_tu|^2\,dv_gdt+\int_{M^t}\partial_t(\frac{1}{2}|\Delta u|^2)\,dv_gdt,
\end{align*}
where we used $\partial_t u|_{\partial M}=\partial_t\partial_\nu u|_{\partial M}=0$.
Hence \eqref{ine1} follows immediately. Moreover, it is easy to see $F_2(u(t))$ is absolutely continuous in $[0,T]$ and monotonically non-increasing.

For \eqref{ine2}, we first use the $L^2$-estimate for the Laplace operator $\Delta$ to get
$$\int_M|\nabla^2u|^2\,dv_g\leq C\big(F_2(u(t))+\|g\|^2_{W^{2,2}(M,N)}\big)
\leq C\big(F_2(u_0)+\|g\|^2_{W^{2,2}(M,N)}\big).$$
Then, by Sobolev's inequality we have
$$\int_M|\nabla(u-g)|^2\,dv_g\leq C\int_M|\nabla^2(u-g)|^2dv_g,$$
and hence
$$\int_M|\nabla u|^2\,dv_g\leq C\big(\int_M|\nabla^2u|^2\,dv_g+\|g\|^2_{W^{2,2}(M,N)}\big).$$
Observe that (\ref{ine2}) is a consequence of the following Sobolev inequality
\begin{eqnarray*}
\big(\int_M|\nabla u|^4\,dv_g\big)^\frac{1}{2}\leq C\big(\int_M|\nabla^2u|^2\,dv_g
+\int_M|\nabla u|^2\,dv_g\big).
\end{eqnarray*}
This completes the proof. \endproof

With the help of Theorem \ref{thm:small-regularity}, we have
\begin{lem}\label{lem:02}
There exists $\epsilon_1>0$ such that if $u\in V(M^T)$ is a solution of \eqref{equ1}-\eqref{equ2}
satisfying $E(u(t);B^M_{2R}(x_0))\leq\epsilon_1$ for some $R>0$, then we have
\begin{eqnarray}\label{ine8}
\int_{B^M_R(x_0)}|\nabla^4u|^2\,dx+\frac{1}{R^2}\int_{B^M_R(x_0)}|\nabla^3u|^2\,dx\leq C\int_{B^M_{2R}(x_0)}|\partial_tu|^2\,dx+\frac{C}{R^4}.
\end{eqnarray}
\end{lem}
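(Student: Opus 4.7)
The plan is to combine a localized $L^2$-estimate for the biharmonic operator with the heat-flow equation $\Delta^2 u=-\partial_tu-f(u)$, and then use the small-energy hypothesis together with the interpolation inequalities of Lemma~\ref{Lem:interpolation} to absorb the nonlinear error $\int\eta^4|f(u)|^2$ back into the left-hand side. Throughout, $\eta$ denotes a cutoff of the form \eqref{ine3}; when $B^M_{2R}(x_0)$ meets $\partial M$, the fixed Dirichlet data $g,h$ contribute only their $W^{4,2}$ and $W^{3,2}$ norms on $\partial M$ to the constant $C$, in the spirit of Theorem~\ref{thm:small-regularity}(ii).

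Concretely, I would set $v=\eta^2 u$ (modified near $\partial M$ to inherit the correct traces from $g,h$) and use the polyharmonic identity $\int|\nabla^4 v|^2=\int|\Delta^2 v|^2$. Writing $\Delta^2 v=\eta^2\Delta^2 u+\mathcal{R}$, where $\mathcal{R}$ collects cutoff errors of the form $\nabla^{4-j}u\cdot\nabla^j\eta$ for $1\le j\le 4$, this converts $\int_{B^M_R}|\nabla^4u|^2$ into a bound of the shape $C\int\eta^4|\Delta^2u|^2+\sum_{j=1}^4 CR^{-2j}\int_{B^M_{2R}}|\nabla^{4-j}u|^2$. The PDE then gives $\int\eta^4|\Delta^2u|^2\le 2\int_{B^M_{2R}}|\partial_tu|^2+2\int\eta^4|f(u)|^2$, producing the required $\partial_tu$ term. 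The cutoff-error contribution $R^{-2}\int|\nabla^3u|^2$ is reabsorbed via \eqref{ine4}, while the remaining ones combine with the global bounds on $\|\nabla^2u\|_{L^2(M)}$ and $\|\nabla u\|_{L^4(M)}$ from Lemma~\ref{lem:01} and the smallness of $\epsilon_1$ to collapse into $CR^{-4}$.

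The main analytic step is bounding $\int\eta^4|f(u)|^2$. Using $|f(u)|^2\leq C(|\nabla^3u|^2|\nabla u|^2+|\nabla^2u|^4+|\nabla u|^8)$ and applying \eqref{ine5}, \eqref{ine6}, \eqref{ine7} on $B^M_{2R}(x_0)$, each of the three terms carries a factor of $\|\nabla^2u\|_{L^2(B^M_{2R})}^2$ or $\|\nabla u\|_{L^4(B^M_{2R})}^4$, both dominated by $E(u(t);B^M_{2R}(x_0))\le\epsilon_1$, multiplying either $\int|\nabla^4u|^2$ or $R^{-4}$. Choosing $\epsilon_1$ small enough then lets one absorb the $\epsilon_1\int|\nabla^4u|^2$ contribution into the left-hand side, producing the $\|\nabla^4u\|_{L^2(B^M_R)}^2$ half of \eqref{ine8}; the $R^{-2}\|\nabla^3u\|_{L^2(B^M_R)}^2$ half then follows by another application of \eqref{ine4}. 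The main anticipated obstacle is the cutoff bookkeeping, namely ensuring that all error integrals remain on $B^M_{2R}$ so that absorption is legitimate (possibly requiring a Moser-style iterated cutoff between radii $R$ and $2R$), together with, in the boundary case, carefully tracking the $g,h$-dependent traces generated by the polyharmonic identity so that they contribute only to $C$ and $CR^{-4}$ and not to unabsorbable terms.
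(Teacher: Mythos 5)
Your proposal re-derives the local $W^{4,2}$ estimate from scratch, whereas the paper's proof is essentially a one-line reduction: rewriting the flow equation as $\Delta^2u=-f(u)+\partial_tu$ shows that at each fixed time $t$, $u(\cdot,t)$ is an \emph{approximated biharmonic map} in the sense of Theorem~\ref{thm:small-regularity} with bi-tension field $\tau_2(u)=\partial_tu$, so the paper simply invokes part (i) or (ii) of that theorem (with $p=2$), rescales $B_1\mapsto B^M_{2R}(x_0)$, and reads off \eqref{ine8}. That theorem was stated for ``approximated'' biharmonic maps precisely to enable this application, and the route you sketch — cutoff, $\int|\nabla^4(\eta^2u)|^2=\int|\Delta^2(\eta^2u)|^2$, bound $\int\eta^4|f(u)|^2$ by the interpolation inequalities of Lemma~\ref{Lem:interpolation}, absorb — is in substance a reproof of Theorem~\ref{thm:small-regularity} itself, so it buys you nothing over the shortcut already available.

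Beyond the inefficiency, there is a genuine gap in the proposal as written: the cutoff error $R^{-2}\int_{B^M_{2R}\setminus B^M_R}|\nabla^3u|^2$ is \emph{not} controllable by the smallness of $\epsilon_1$. If you feed it through \eqref{ine4} as stated, you get $C\int_{B^M_{2R}}|\nabla^4u|^2+CR^{-4}\int_{B^M_{2R}}|\nabla^2u|^2$ with a constant $C$ of order one and an integral on the larger ball $B^M_{2R}$; neither the constant nor the domain allows absorption into $\int_{B^M_R}|\nabla^4u|^2$. To close this you need an interpolation with a free small parameter $\delta$ (not just the fixed-constant \eqref{ine4}) together with the weighted quantity $\Psi_j(p)=\sup_\sigma(1-\sigma)^j\|\nabla^ju\|_{L^p(B_\sigma)}$ and the interpolation $\Psi_j\le\epsilon^{4-j}\Psi_4+C\epsilon^{-j}\Psi_0$ — exactly the device used in the paper's proof of Theorem~\ref{thm:small-regularity}. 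You flag ``a Moser-style iterated cutoff'' as a possible obstacle, which is the right instinct, but it is not optional: without it, or without the free-parameter interpolation, the absorption step fails. The same caveat applies to the boundary case, where the proof must also contend with the trace terms before the $g,h$-dependence can be swept into the constant. Given that all of this machinery is already packaged in Theorem~\ref{thm:small-regularity}, the cleanest fix is to use it directly rather than rebuild it.
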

\proof Since $u$ satisfies \eqref{Heat-Flow}, we have that\\
(i) if $B^M_{2R}(x_0)\cap \partial M=\emptyset$, then by taking $\tau_2(u)=\partial_tu$ in Theorem \ref{thm:small-regularity} (i) and applying a standard scaling argument, we have
\begin{align*}
\int_{B^M_R(x_0)}|\nabla^4u|^2dx+\frac{1}{R^2}\int_{B^M_R(x_0)}|\nabla^3u|^2dx&\leq C\int_{B^M_{2R}(x_0)}|\partial_tu|^2dx+\frac{CE(u(t);B^M_{2R}(x_0))}{R^4}\\
&\leq C\int_{B^M_{2R}(x_0)}|\partial_tu|^2dx+\frac{C}{R^4},
\end{align*}(ii)
if $B^M_{2R}(x_0)\cap \partial M\neq\emptyset$, then Theorem \ref{thm:small-regularity} (ii) implies
that
\begin{align*}
&\int_{B^M_R(x_0)}|\nabla^4u|^2dx+\frac{1}{R^2}\int_{B^M_R(x_0)}|\nabla^3u|^2dx\\
&\leq C\int_{B^M_{2R}(x_0)}|\partial_tu|^2dx+C\frac{E(u(t);B^M_{2R}(x_0))
+\|g\|^2_{W^{4,2}(\partial^0 B^M_{2R}(x_0))}+\|h\|^2_{W^{3,2}(\partial^0 B^M_{2R}(x_0))}}{R^4}\\
&\leq C\int_{B^M_{2R}(x_0)}|\partial_tu|^2dx+\frac{C}{R^4}.
\end{align*}
Here $\partial^0B^M_{2R}(x_0)=\partial B^M_{2R}(x_0)\cap\partial M$.
Hence the conclusion of the lemma follows.\endproof

From Lemma \ref{lem:01} and Lemma \ref{lem:02}, we can easily obtain the following corollary.
\begin{cor}\label{cor1}
Let $u\in V(M^T)$ be a solution of \eqref{equ1}-\eqref{equ2}. Assume that there exists $R>0$ such that $$\sup_{0\leq t<T}E(u(t);B^M_{2R}(x_0))\leq\epsilon_1.$$ Then we have for all $t\in [0,T)$
\begin{eqnarray}
&&\int_{(B^M_{R}(x_0))^t}|\nabla^3u|^2\leq C+\frac{ct}{R^2},\label{ine9}
\\&&\int_{(B^M_{R}(x_0))^t}|\nabla^4u|^2\leq C+\frac{ct}{R^4}.\label{ine10}
\end{eqnarray}
\end{cor}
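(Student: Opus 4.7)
The proof plan is to integrate the pointwise-in-time bound from Lemma \ref{lem:02} over the time interval $[0,t]$, and then close the estimate using the global energy dissipation bound from Lemma \ref{lem:01}. Since the hypothesis $\sup_{0\leq s<T}E(u(s);B^M_{2R}(x_0))\leq\epsilon_1$ holds uniformly in time, Lemma \ref{lem:02} applies at every time slice $s\in[0,t]$.

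First, for each $s\in[0,t]$, Lemma \ref{lem:02} gives
\begin{equation*}
\int_{B^M_R(x_0)}|\nabla^4u(\cdot,s)|^2\,dx+\frac{1}{R^2}\int_{B^M_R(x_0)}|\nabla^3u(\cdot,s)|^2\,dx\leq C\int_{B^M_{2R}(x_0)}|\partial_tu(\cdot,s)|^2\,dx+\frac{C}{R^4}.
\end{equation*}
Integrating this inequality in $s$ over $[0,t]$ and enlarging the integration domain on the right from $B^M_{2R}(x_0)$ to $M$, I obtain
\begin{equation*}
\int_{(B^M_R(x_0))^t}|\nabla^4u|^2+\frac{1}{R^2}\int_{(B^M_R(x_0))^t}|\nabla^3u|^2\leq C\int_0^t\!\!\int_M|\partial_tu|^2\,dv_g\,ds+\frac{Ct}{R^4}.
\end{equation*}

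Next, by the energy identity \eqref{ine1} of Lemma \ref{lem:01},
\begin{equation*}
\int_0^t\!\!\int_M|\partial_tu|^2\,dv_g\,ds\leq\tfrac{1}{2}F_2(u_0)\leq C,
\end{equation*}
so that
\begin{equation*}
\int_{(B^M_R(x_0))^t}|\nabla^4u|^2+\frac{1}{R^2}\int_{(B^M_R(x_0))^t}|\nabla^3u|^2\leq C+\frac{Ct}{R^4}.
\end{equation*}
The first term already gives \eqref{ine10}. Multiplying the $|\nabla^3u|^2$ term by $R^2$ produces the bound $CR^2+\frac{Ct}{R^2}$; since $R<\frac{1}{4}{\rm inj}_M$ is bounded, the term $CR^2$ can be absorbed into the universal constant $C$, yielding \eqref{ine9}.

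There is no substantive obstacle in this argument—the entire content of the corollary is packaged in Lemma \ref{lem:02} (which provides the elliptic $W^{4,2}$-type bound at each time slice) together with the dissipation estimate from Lemma \ref{lem:01}. The only minor point to verify is the form of the right-hand side in \eqref{ine9}, which relies on the boundedness of $R$ coming from the standing restriction $R<\tfrac{1}{4}{\rm inj}_M$.
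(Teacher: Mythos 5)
Your proposal is correct and follows essentially the same route as the paper: the paper's proof is the one-line remark ``Integrating \eqref{ine8} from $0$ to $t$ and applying Lemma \ref{lem:01} yields \eqref{ine9} and \eqref{ine10},'' which is exactly your integration-in-time plus dissipation argument. The only addition you make is to spell out the absorption $CR^2\le C$ (valid since $R<\tfrac14\,\mathrm{inj}_M$) when isolating the $|\nabla^3 u|^2$ bound, a detail the paper leaves implicit.
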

\begin{proof}
Integrating \eqref{ine8} from $0$ to $t$ and applying Lemma \ref{lem:01} yields \eqref{ine9} and \eqref{ine10}.
\end{proof}

In the next step we derive an $L^2$-estimate for $\partial_tu$, which in turn yields an $L^2$-estimate
for $\nabla^4u$, and then we can apply both $L^p$ and Schauder estimates to achieve the desired $C^l$ estimates.

\begin{lem}\label{lem:03}
Let $u\in V(M^T)\cap_{\sigma>0}C^4(M_\sigma^T;N)$ be a solution of \eqref{equ1}-\eqref{equ2}. Assume that there exists $R>0$ such that $$\sup_{0\leq t<T}E(u(t);B^M_{4R}(x_0))\leq\epsilon_1.$$ Then there exists $0<\delta<min\{T, CR^4\}$ such that for all $s,t\in(0,T)$ with $s<t$ and $|t-s|<\delta$, we have
\begin{eqnarray}\label{inemain}
\sup_{s\leq t'\leq t}\int_M\eta^4|\partial_tu(\cdot,t')|^2\,dx\leq C\int_M\eta^4|\partial_tu(\cdot,s)|^2\,dx+\frac{C}{R^4},
\end{eqnarray}
where $\eta$ is a cut off function, with support in $B_{2R}(x_0)$, defined as in \eqref{ine3}.
\end{lem}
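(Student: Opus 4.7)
Set $v=\partial_t u$. Since the Dirichlet data $g,h$ are independent of $t$ and $u\in C^4(M_\sigma^T,N)$ for every $\sigma>0$, we have $v|_{\partial M}=0$ and $\partial_\nu v|_{\partial M}=0$ for $t>0$. Differentiating \eqref{equ1} in $t$ yields
\[
\partial_t v+\Delta^2 v=-\partial_t f(u),
\]
and from the structure of $f(u)$ the nonlinearity $\partial_t f(u)$ is schematically a sum of terms of the form
$\nabla^3 u\,\#\,\nabla v,\ \nabla^3 v\,\#\,\nabla u,\ \nabla^2 u\,\#\,\nabla^2 v,\ \nabla^2 u\,\#\,\nabla u\,\#\,\nabla v,\ \nabla^2 v\,\#\,\nabla u\,\#\,\nabla u,\ (\nabla u)^3\,\#\,\nabla v.$

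The plan is to multiply by $\eta^4 v$, integrate over $M$, and derive a Gronwall inequality. Because $v=\partial_\nu v=0$ on $\partial M$ and $\eta$ is supported in $B^M_{2R}(x_0)$, integrating by parts twice the biharmonic term gives
\[
\tfrac12\frac{d}{dt}\!\int_M \eta^4 v^2\,dv_g+\!\int_M \eta^4|\Delta v|^2\,dv_g=-\!\int_M \eta^4 v\,\partial_t f(u)\,dv_g-\!\int_M \big(2\Delta v\,\nabla v\!\cdot\!\nabla\eta^4+v\,\Delta v\,\Delta\eta^4\big)dv_g.
\]
The cut-off remainder on the right is absorbed by $\tfrac14\int \eta^4|\Delta v|^2$, $\frac{C}{R^4}\int_{B^M_{2R}(x_0)} v^2$, and (after another interpolation) $\frac{C}{R^2}\int_{B^M_{2R}(x_0)}|\nabla v|^2$ via Young's inequality. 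For the nonlinear integral, one integrates by parts once more on the $\nabla^3 v\,\#\,\nabla u$ piece, transferring one derivative onto $\nabla u$, onto $\eta$, or pairing with $v$ to form $|\nabla v|^2$, so that no term with three derivatives of $v$ remains. Every resulting integrand is of the schematic form $\eta^{4-j}(|\nabla^2 u|+|\nabla u|^2)\cdot(|\nabla v|+|\nabla^2 v|)\cdot(\text{cut-off factor})$. By H\"older's inequality, the interpolation estimates \eqref{ine5}--\eqref{ine7} applied to $u$ on $B^M_{2R}(x_0)$, and the hypothesis $E(u(t);B^M_{4R}(x_0))\le\epsilon_1$ with $\epsilon_1$ small, each nonlinear coefficient carries a factor as small as desired, so a remaining copy of $\int \eta^4|\Delta v|^2$ can be absorbed into the left-hand side. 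What survives is a differential inequality of the form
\[
\frac{d}{dt}\int_M \eta^4 v^2\,dv_g\le \Phi(t)\int_M \eta^4 v^2\,dv_g+\frac{C}{R^4}\Big(1+\int_{B^M_{2R}(x_0)}|\nabla^4 u|^2\,dv_g\Big),
\]
where $\Phi\in L^1_{\mathrm{loc}}(0,T)$ by Lemma \ref{lem:01} together with Corollary \ref{cor1}.

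Applying Gronwall on $[s,t]$ with $|t-s|<\delta$ chosen so that $\delta<\min\{T,CR^4\}$ and $\int_s^t\Phi\le 1$, and using the a priori bound $\int_s^t\int_{B^M_{2R}(x_0)}|\nabla^4 u|^2\le C+Ct/R^4$ from \eqref{ine10}, produces exactly the claimed estimate \eqref{inemain}. The main technical obstacle is the term $\int \eta^4 v\,\nabla^3 v\cdot\nabla u$: as written it involves three derivatives of $v$, which are not controlled. Only a careful redistribution of derivatives via integration by parts, together with the small-energy interpolation bounds of Lemma \ref{Lem:interpolation} on $\nabla u$ and $\nabla^2 u$, keeps every resulting coefficient small enough (with the right $R^{-j}$ scaling) for the absorption into $\int \eta^4|\Delta v|^2$ to go through while leaving a residue of only $\frac{C}{R^4}$.
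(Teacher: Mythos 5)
Your strategy is recognisably of the same family as the paper's (differentiate \eqref{equ1} in $t$, test with $\eta^4\partial_t u$, integrate by parts, exploit the small-energy hypothesis and the interpolation inequalities of Lemma \ref{Lem:interpolation} and Corollary \ref{cor1}), but you organise the conclusion as a pointwise-in-time differential inequality plus Gronwall, whereas the paper integrates directly over $M_s^t$ and closes the estimate with a ``sup attained at the endpoint'' argument: one assumes without loss of generality that $\sup_{s\le t'\le t}\int_M\eta^4|\partial_t u(\cdot,t')|^2=\int_M\eta^4|\partial_t u(\cdot,t)|^2$, estimates the three nonlinear time-integrals $I_1,I_2,I_3$ by a factor $C\epsilon_1^{1/2}\big(\sup\int\eta^4|\partial_t u|^2+\int_{M_s^t}\eta^4|\Delta\partial_t u|^2+C\big)$, and absorbs. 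This avoids the Gronwall exponential and is arguably a bit cleaner, but the two devices buy essentially the same thing here.

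The more serious difference, and the actual gap in your plan, lies in how the nonlinear term $-\int\eta^4\langle\partial_t f(u),\partial_t u\rangle$ is handled. The paper's right-hand side of \eqref{ine11} is
\[
C\int_{M_s^t}\eta^4\big(|\nabla\Delta u||\nabla u|+|\nabla^2 u|^2+|\nabla u|^4\big)|\partial_t u|^2,
\]
which contains \emph{no} derivatives of $\partial_t u$ at all. This is not achievable by integration by parts alone; it uses the geometric fact that $f(u)=\sum_i\lambda_i(u)\,\nu_i(u)$ is normal to $N$, while $\partial_t u$ is tangent, so $\langle\nu_i(u),\partial_t u\rangle\equiv0$ and hence
\[
\langle\partial_t f(u),\partial_t u\rangle=\sum_i\lambda_i(u)\,\langle(d\nu_i)(\partial_t u),\partial_t u\rangle,
\]
with $\lambda_i(u)\sim\nabla^3 u\#\nabla u+\nabla^2 u\#\nabla^2 u+\nabla^2 u\#(\nabla u)^2+(\nabla u)^4$. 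Every surviving coefficient carries at least one factor of $\nabla u$ or $\nabla^2 u$, which is what produces the $\epsilon_1^{1/2}$ smallness used to close the estimate. You instead treat $\partial_t f(u)$ as a generic schematic expression and propose to integrate by parts only the $\nabla^3 v\#\nabla u$ piece. But then the term $\nabla^3 u\#\nabla v\cdot v$ (which the orthogonality kills in the paper's version) remains, and after integration by parts it produces $\nabla^4 u\#|v|^2$ and $\frac1R\nabla^3 u\#|v|^2$. Neither is of the schematic form ``$\eta^{4-j}(|\nabla^2 u|+|\nabla u|^2)(|\nabla v|+|\nabla^2 v|)$'' you assert, and neither carries an $\epsilon_1$ factor, so the claim that ``each nonlinear coefficient carries a factor as small as desired'' fails for these terms. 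Your enumeration of $\partial_t f(u)$ also omits the coefficient-differentiation contributions $\nabla^3 u\#\nabla u\#v$, $(\nabla^2 u)^2\#v$, etc.\ (these are precisely what the paper's $I_1,I_2,I_3$ come from). The Gronwall strategy can likely be salvaged — $\|\nabla^4 u\|_{L^2(B_{2R})}^2$ is $L^1$ on $[s,t]$ by Corollary \ref{cor1}, so it can be packed into $\Phi(t)$ after a Young split — but as written the argument does not close, and the crucial simplification coming from $f(u)\perp T_uN$ is the one idea you are missing.
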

\proof

Differentiating equation \eqref{Heat-Flow} with respect to $t$, multiplying the resulting equation with $\eta^4\partial_tu$, and integrating over $M$ and applying integration by parts, we get
\begin{eqnarray}\label{ine11}
&&\frac{1}{2}\int_{M_s^t}\eta^4\partial_t|\partial_tu|^2+\int_{M_s^t}\eta^4|\Delta\partial_tu|^2
+2\int_{M_s^t}\nabla\eta^4\nabla\partial_tu\Delta\partial_tu+\int_{M_s^t}\Delta\eta^4\partial_tu\Delta\partial_tu\nonumber
\\&\leq& C\int_{M_s^t}\eta^4(|\nabla\Delta u||\nabla u||\partial_tu|^2+
|\nabla^2u|^2|\partial_tu|^2+|\nabla u|^4|\partial_tu|^2)\nonumber
\\&:=&I_1+I_2+I_3.
\end{eqnarray}
Without loss of generality,  we may assume that
$$\sup_{s\leq t'\leq t}\int_M\eta^4|\partial_tu(\cdot,t')|^2=\int_M\eta^4|\partial_tu(\cdot,t)|^2.$$
Let's first estimate $I_1$. With the help of H\"older's inequality and the Sobolev embedding $W^{1,2}(M)\hookrightarrow L^4(M)$, we get
\begin{align*}
I_1&\leq C\int^t_s(\int_{B^M_{2R}(x_0)}|\nabla u|^4)^\frac{1}{4}(\int_M\eta^8|\partial_tu|^4)^\frac{1}{2}(\int_{B^M_{2R}(x_0)}|\nabla\Delta u|^4)^\frac{1}{4}
\\&\leq C\epsilon_1^\frac{1}{2}\int^t_s(\int_M\eta^4|\partial_tu|^2+|\nabla\eta|^2\eta^2|\partial_tu|^2+\eta^4|\nabla\partial_tu|^2)\big[\int_{B^M_{2R}(x_0)}|\nabla^4u|^2+\frac{1}{R^2}|\nabla^3u|^2\big]^\frac{1}{2}
\end{align*}
Since  $\partial_tu|_{\partial M}=0$,  by integration by part we get
\begin{align*}
\int_M\eta^4|\nabla\partial_tu|^2
&=-\int_M\Delta\partial_tu\partial_tu\eta^4
+4\nabla\partial_tu\partial_tu(\eta^3\nabla\eta)\\
&\leq \int_M|\Delta\partial_tu\partial_tu\eta^4|
+\frac{1}{2}\int_M\eta^4|\nabla\partial_tu|^2+C\int_M\eta^2|\nabla\eta|^2|\partial_tu|^2.
\end{align*}
Thus we have
\begin{align*}
\int_M\eta^4|\nabla\partial_tu|^2
\leq C\int_M|\Delta\partial_tu\partial_tu\eta^4|
+C\int_M\eta^2|\nabla\eta|^2|\partial_tu|^2.
\end{align*}
Therefore we obtain
\begin{align}\label{ine12}
I_1&\leq C\epsilon_1^\frac{1}{2}\int^t_s(\int_M|\Delta\partial_tu\partial_tu\eta^4|+\eta^4|\partial_tu|^2+|\nabla\eta|^2\eta^2|\partial_tu|^2)
(\int_{B^M_{2R}}|\nabla^4u|^2+\frac{1}{R^2}|\nabla^3u|^2)^\frac{1}{2}\nonumber
\\&\leq C\epsilon_1^\frac{1}{2}\int^t_s\big((\int_M\eta^4|\Delta\partial_tu|^2)^\frac{1}{2}(\int_M\eta^4|\partial_tu|^2)^\frac{1}{2}
+\int_M\eta^4|\partial_tu|^2+\int_M|\nabla\eta|^2\eta^2|\partial_tu|^2\big)\nonumber
\\&\quad \times(\int_{B^M_{2R}}|\nabla^4u|^2+\frac{1}{R^2}|\nabla^3u|^2)^\frac{1}{2}\nonumber
\\&\leq C\epsilon_1^\frac{1}{2}\int^t_s\big[(\int_M\eta^4|\Delta\partial_tu|^2)^\frac{1}{2}(\int_M\eta^4|\partial_tu|^2)^\frac{1}{2}
+\int_M\eta^4|\partial_tu|^2\notag
\\&\quad+(\int_M\eta^4\partial_tu|^2)^{\frac{1}{2}}(\int_M|\nabla\eta|^4\partial_tu|^2)^{\frac{1}{2}}\big]\nonumber
[\int_{B^M_{2R}}|\nabla^4u|^2+\frac{1}{R^2}\int_{B^M_{2R}}|\nabla^3u|^2]^\frac{1}{2}\nonumber
\\&\leq C\epsilon_1^\frac{1}{2}(\sup_{s\leq t'\leq \nonumber t}\int_M\eta^4|\partial_tu(\cdot,t')|^2)^\frac{1}{2}(\int_{M_s^t}\eta^4|\Delta\partial_tu|^2+\eta^4|\partial_tu|^2
+\frac{1}{R^4}\int_{M_s^t}|\partial_tu|^2)^\frac{1}{2}
\\&\quad \times(C+\frac{C\delta}{R^4})^\frac{1}{2}\nonumber
\\&\leq C\epsilon_1^\frac{1}{2}(\int_M\eta^4|\partial_tu(\cdot,t)|^2+\int_{M_s^t}\eta^4|\Delta\partial_tu|^2+C).
\end{align}
Similarly,
\begin{eqnarray*}
I_2=\int_{M_s^t}\eta^4|\nabla^2u|^2|\partial_tu|^2
\leq \int_s^t(\int_M\eta^8|\partial_tu|^4)^\frac{1}{2}(\int_{B^M_{2R}(x_0)}|\nabla^2u|^4)^\frac{1}{2}.
\end{eqnarray*}
By \eqref{ine6}, we get
\begin{eqnarray*}
(\int_{B^M_{2R}(x_0)}|\nabla^2u|^4)^\frac{1}{2}
&\leq& C(\int_{B^M_{2R}(x_0)}|\nabla^2u|^2)^\frac{1}{2}(\int_{B^M_{2R}(x_0)}|\nabla^4u|^2+\frac{1}{R^4})^\frac{1}{2}
\\&\leq& C\epsilon_1^\frac{1}{2}(\int_{B^M_{2R}(X_0)}|\nabla^4u|^2+\frac{1}{R^4})^\frac{1}{2}.
\end{eqnarray*}
Then, by the same argument as in the estimates of $I_1$, we get
\begin{eqnarray}\label{ine13}
I_2\leq C\epsilon_1^\frac{1}{2}(\int_M\eta^4|\partial_tu(\cdot,t)|^2+\int_{M_s^t}\eta^4|\Delta\partial_tu|^2+C).
\end{eqnarray}
For $I_3$, we have
\begin{eqnarray*}
I_3=\int_s^t\int_M\eta^4|\partial_tu|^2|\nabla u|^4
\leq \int_s^t(\int_M\eta^8|\partial_tu|^4)^\frac{1}{2}(\int_{B^M_{2R}(x_0)}|\nabla u|^8)^\frac{1}{2}.
\end{eqnarray*}
By \eqref{ine7}, we get
\begin{equation*}
(\int_{B_{B^M_{2R}(X_0)}}|\nabla u|^8)^\frac{1}{2}\leq C(\int_{B^M_{2R}}|\nabla u|^4)^\frac{1}{2}(\int_{B^M_{2R}}|\nabla^4u|^2+\frac{1}{R^4})^\frac{1}{2}.
\end{equation*}
Then, by the same arguments as in the estimates of $I_1, I_2$,  we obtain
\begin{eqnarray}\label{ine14}
I_3\leq C\epsilon_1^\frac{1}{2}(\int_M\eta^4|\partial_tu(\cdot,t)|^2+\int_{M_s^t}\eta^4|\Delta\partial_tu|^2+C).
\end{eqnarray}
Combining inequalities \eqref{ine11}-\eqref{ine14} yields
\begin{eqnarray}\label{ine15}
&&\frac{1}{2}\int_{M_s^t}\eta^4\partial_t|\partial_tu|^2+\int_{M_s^t}\eta^4|\Delta\partial_tu|^2
+2\int_{M_s^t}\nabla\eta^4\nabla\partial_tu\Delta\partial_tu+\int_{M_s^t}\Delta\eta^4\partial_tu\Delta\partial_tu\nonumber
\\&&\leq C\epsilon_1^\frac{1}{2}(\int_M\eta^4|\partial_tu(\cdot,t)|^2+\int_{M_s^t}\eta^4|\Delta\partial_tu|^2+C).
\end{eqnarray}
By the Cauchy-Schwartz inequality, we have
$$2|\int_{M_s^t}\nabla\eta^4\nabla\partial_tu\Delta\partial_tu|\leq\frac{1}{4}\int_{M_s^t}\eta^4|\Delta\partial_tu|^2
+64\int_{M_s^t}\eta^2|\nabla\eta|^2|\nabla\partial_tu|^2$$ and
\begin{align}\label{ine17}
\int_{M_s^t}\Delta\eta^4\partial_tu\Delta\partial_tu&=
\int_{M_s^t}(4\eta^3\Delta\eta+12\eta^2|\nabla\eta|^2)\partial_tu\Delta\partial_tu\notag\\
&\geq-\frac{1}{4}\int_{M_s^t}\eta^4|\Delta\partial_tu|^2-\frac{C}{R^4}\int_{M^t}|\partial_tu|^2\notag\\
&\geq-\frac{1}{4}\int_{M_s^t}\eta^4|\Delta\partial_tu|^2-\frac{C}{R^4}.
\end{align}
Furthermore, by integration by parts and noting that $\partial_tu|_{\partial M}=0$, we have
\begin{align*}
64\int_{M_s^t}\eta^2|\nabla\eta|^2|\nabla\partial_tu|^2
&=-64\int_{M_s^t}\nabla(\eta^2|\nabla\eta|^2)\nabla\partial_tu\partial_tu
-64\int_{M_s^t}\eta^2|\nabla\eta|^2\Delta\partial_tu\partial_tu
\\&\leq \int_{M_s^t}\eta^2|\nabla\eta|^2|\nabla\partial_tu|^2+\frac{1}{8}\int_{M_s^t}\eta^4|\Delta\partial_tu|^2+\frac{C}{R^4}\int_{M_s^t}|\partial_tu|^2.
\end{align*}
Therefore we get
\begin{eqnarray}\label{ine16}
2\int_{M_s^t}\nabla\eta^4\nabla\partial_tu\Delta\partial_tu\geq-\frac{1}{2}\int_{M_s^t}\eta^4|\Delta\partial_tu|^2-\frac{C}{R^4}.
\end{eqnarray}
Combining inequalities \eqref{ine15}, \eqref{ine16} and \eqref{ine17} and choosing $\epsilon_1$
sufficiently small,  we can finally achieve \eqref{inemain}. \endproof

Now we can derive an $L^2$ estimate for $\nabla^4u$ by the above lemma.

\begin{lem}\label{Lemma:W^{4,2}}
Let $u\in V(M^T)\cap_{\sigma>0}C^4(M_\sigma^T;N)$ be a solution of \eqref{equ1}-\eqref{equ2}. Assume that there exists $R>0$ such that $$\sup_{0\leq t<T}E(u(t);B^M_{4R}(x_0))\leq\epsilon_1.$$ Then there exists $0<\delta<min\{T, CR^4\}$ such that for all $t'\in [\frac{3\delta}{4}, T)$  we have
\begin{eqnarray}
\int_{B^M_{\frac{R}{2}}(x_0)}|\nabla^4u(\cdot,t')|^2\leq C(\frac{1}{\delta}+\frac{1}{R^4}).
\end{eqnarray}
\end{lem}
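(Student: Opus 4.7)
The plan is to reduce the $L^2$ bound on $\nabla^4 u(\cdot,t')$ to an $L^2$ bound on $\partial_t u(\cdot,t')$ via the small energy elliptic estimate (Lemma \ref{lem:02} applied at the scale $R/2$), and then to control $\partial_t u(\cdot,t')$ by combining the parabolic reverse-H\"older type bound of Lemma \ref{lem:03} with an averaging argument coming from the global energy inequality \eqref{ine1}.

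First, fix $t'\in[\tfrac{3\delta}{4},T)$. Since the assumption $\sup_{0\le t<T}E(u(t);B^M_{4R}(x_0))\le\epsilon_1$ implies in particular $E(u(t');B^M_R(x_0))\le\epsilon_1$, I would apply Lemma \ref{lem:02} with the radius $R$ there replaced by $R/2$ to obtain
\begin{equation*}
\int_{B^M_{R/2}(x_0)}|\nabla^4u(\cdot,t')|^2\,dx \;\le\; C\int_{B^M_{R}(x_0)}|\partial_tu(\cdot,t')|^2\,dx+\frac{C}{R^4}.
\end{equation*}
So it suffices to bound $\int_{B^M_R(x_0)}|\partial_tu(\cdot,t')|^2$ by $C(\delta^{-1}+R^{-4})$.

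Next, I would use the averaging step. By Lemma \ref{lem:01} we have $\int_0^T\!\int_M|\partial_tu|^2\,dv_g dt\le \tfrac12 F_2(u_0)\le C$. The interval $I:=[t'-\tfrac{3\delta}{4},\,t'-\tfrac{\delta}{2}]$ has length $\delta/4$ and lies in $(0,T)$ because $t'\ge \tfrac{3\delta}{4}$. By Chebyshev / mean value, there exists $s\in I$ with
\begin{equation*}
\int_M|\partial_tu(\cdot,s)|^2\,dv_g \;\le\; \frac{4}{\delta}\int_{t'-3\delta/4}^{t'-\delta/2}\!\!\int_M|\partial_tu|^2\,dv_g dt \;\le\; \frac{C}{\delta}.
\end{equation*}
For this choice, $t'-s\in[\tfrac{\delta}{2},\tfrac{3\delta}{4}]<\delta$, so Lemma \ref{lem:03} applies with our $\eta$ (supported in $B^M_{2R}(x_0)$ and $\equiv 1$ on $B^M_R(x_0)$) and gives
\begin{equation*}
\int_{B^M_R(x_0)}|\partial_tu(\cdot,t')|^2\,dv_g \;\le\; \int_M\eta^4|\partial_tu(\cdot,t')|^2\,dv_g \;\le\; C\int_M\eta^4|\partial_tu(\cdot,s)|^2\,dv_g+\frac{C}{R^4}\;\le\;\frac{C}{\delta}+\frac{C}{R^4}.
\end{equation*}

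Plugging this into the elliptic estimate from the first step yields the desired inequality. The main conceptual work has been completed in Lemma \ref{lem:03}; the only delicate point in this lemma is making sure that the averaging interval $[t'-\tfrac{3\delta}{4},t'-\tfrac{\delta}{2}]$ both lies strictly in $(0,T)$ (this is precisely why the statement is restricted to $t'\ge \tfrac{3\delta}{4}$) and has diameter to $t'$ smaller than $\delta$, so that Lemma \ref{lem:03} is applicable. No other obstacle is anticipated.
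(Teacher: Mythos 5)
Your proposal is correct and follows essentially the same route as the paper's proof: reduce to bounding $\int_{B^M_R(x_0)}|\partial_t u(\cdot,t')|^2$ via the scaled Lemma \ref{lem:02}, then use the global dissipation bound from Lemma \ref{lem:01} together with a mean-value choice of $s$ in a window of length $\delta/4$ preceding $t'$, and close with Lemma \ref{lem:03}. The only cosmetic difference is the choice of averaging window (the paper uses $[t'-\tfrac{\delta}{2},t'-\tfrac{\delta}{4}]$, which stays strictly inside $(0,T)$ even at $t'=\tfrac{3\delta}{4}$, whereas your $[t'-\tfrac{3\delta}{4},t'-\tfrac{\delta}{2}]$ touches $0$ at that endpoint — harmless, since a measure-zero adjustment suffices).
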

\proof By Lemma \ref{lem:02}, we have, for all $t'\in [\frac{3\delta}{4}, T)$
\begin{eqnarray}
\int_{B^M_{\frac{R}{2}}(x_0)}|\nabla^4u|^2(\cdot,t')dx\leq C\int_{B^M_{R}(x_0)}|\partial_tu|^2(\cdot,t')dx+\frac{C}{R^4}.
\end{eqnarray}
Let $\eta$ be a cut off function as in Lemma \ref{lem:03}. Without loss of generality, we assume that
$$\int_M\eta^4|\partial_tu(\cdot,s)|^2dx=\inf_{t'-\frac{\delta}{2}\leq s'\leq t'-\frac{\delta}{4}}\int_M\eta^4|\partial_tu(\cdot,s')|^2.$$
Then Lemma \ref{lem:03} implies
\begin{align*}
\sup_{t'-\frac{\delta}{4}\leq t\leq t'}\int_M\eta^4|\partial_tu(\cdot,t)|^2
&\leq C\int_M\eta^4|\partial_tu(\cdot,s)|^2+\frac{C}{R^4}
\\&= C\inf_{t'-\frac{\delta}{2}\leq s'\leq t'-\frac{\delta}{4}}\int_M\eta^4|\partial_tu(\cdot,s')|^2+\frac{C}{R^4}
\\&\leq \frac{C}{\delta}\int_{t'-\frac{\delta}{2}}^{t'-\frac{\delta}{4}}\int_M|\partial_tu|^2+\frac{C}{R^4}
\leq C(\frac{1}{\delta}+\frac{1}{R^4}).
\end{align*}
Therefore we have
\begin{eqnarray}
\int_{B^M_{R}(x_0)}|\partial_tu|^2(\cdot,t')dx\leq C(\frac{1}{\delta}+\frac{1}{R^4}).
\end{eqnarray}
This completes the proof.
\endproof

By Lemma \ref{Lemma:W^{4,2}}, we have

\begin{thm}\label{thm:C^k}
There exists $\epsilon_1>0$, depending only on $M,N,u_0,g$, such that for $0<T<\infty$, if $u$ is a smooth solution of \eqref{equ1}-\eqref{equ2} satisfying
\begin{eqnarray}
\sup_{0<t\leq T}\int_{B^M_{4R}(x_0)}|\nabla^2u(\cdot,t)|^2dx+(\int_{B^M_{4R}(x_0)}|\nabla u(\cdot,t)|^4dx)^\frac{1}{2}\leq\epsilon_1,
\end{eqnarray}
for some $R<\frac{1}{2}inj_M$ and $x_0\in M$, then we have
\begin{eqnarray}
\max_{\frac{T}{2}\leq t\leq T}\|u\|_{C^k(B^M_{\frac{R}{4}}(x_0))}\leq C\Big(k,R^{-1},T,\|\nabla^2u_0\|_{L^2(M)},\|g\|_{C^{k}(\partial M)}, \|h\|_{C^{k-1}(\partial M)}\Big).
\end{eqnarray}
\end{thm}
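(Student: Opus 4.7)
The plan is to iterate the small-energy elliptic regularity of Theorem \ref{thm:small-regularity} and the $L^2$ bound of Lemma \ref{Lemma:W^{4,2}} in a parabolic bootstrap, culminating in parabolic Schauder estimates for the linear operator $\partial_t + \Delta^2$.

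First I would fix $t' \in [T/2, T]$ and apply Lemma \ref{Lemma:W^{4,2}} with $\delta$ of order $\min\{T, R^4\}$ to obtain
\[
\|\nabla^4 u(\cdot,t')\|_{L^2(B^M_{R/2}(x_0))}^2 + \|\partial_t u(\cdot,t')\|_{L^2(B^M_{R/2}(x_0))}^2 \le C(T^{-1}+R^{-4}),
\]
uniformly in $t' \in [T/2,T]$. Since $\dim M = 4$, Sobolev embedding combined with the interpolation inequalities of Lemma \ref{Lem:interpolation} yields $\nabla^3 u(\cdot,t') \in L^4$, $\nabla^2 u(\cdot,t') \in L^p$ for every $p<\infty$, and $\nabla u(\cdot,t') \in L^q$ for every $q<\infty$, with explicit bounds. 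Treating $u(\cdot,t')$ as a solution of the elliptic equation $\Delta^2 u = -f(u) - \partial_t u$ with $\tau_2(u) := \partial_t u$, I would then apply the $W^{4,p}$ estimate of Theorem \ref{thm:small-regularity}(i) in the interior case, or Theorem \ref{thm:small-regularity}(ii) when $B^M_{4R}(x_0) \cap \partial M \neq \emptyset$, to conclude that $u(\cdot,t') \in W^{4,p}(B^M_{R/4}(x_0))$ for some $p>2$, with the bound controlled by $T^{-1}$, $R^{-1}$, $\|g\|_{W^{4,2}(\partial M)}$, and $\|h\|_{W^{3,2}(\partial M)}$.

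To upgrade to bounds involving time derivatives, I would differentiate the equation in $t$ and iterate Lemma \ref{lem:03}: each pass yields an $L^2$ estimate on $\partial_t^k u$, which via the elliptic equation of the form $\Delta^2(\partial_t^{k-1} u) = -\partial_t^k u - \partial_t^{k-1} f(u)$ converts into higher spatial regularity through Theorem \ref{thm:small-regularity} again. Alternating between these elliptic estimates and parabolic $L^p$ estimates applied to the full equation $(\partial_t + \Delta^2)u = -f(u)$, I would bootstrap: because $|f(u)| \le C(|\nabla^3 u||\nabla u| + |\nabla^2 u|^2 + |\nabla u|^4)$, each stage upgrades the integrability of $f(u)$, and once $p>4$ the Sobolev embedding $W^{4,p} \hookrightarrow C^{3+\alpha}$ gives H\"older regularity of $f(u)$.

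At that point, parabolic Schauder theory for $\partial_t + \Delta^2$ applied to $u - G$, where $G$ is a smooth extension of the Dirichlet data $(g,h)$ realizing the boundary conditions, delivers $u \in C^{4+\alpha,1+\alpha/4}$ on a slightly smaller parabolic cylinder, and repeated Schauder differentiation of the equation yields $C^k$ bounds for every $k$ with the stated dependence on $k$, $R^{-1}$, $T$, $\|\nabla^2 u_0\|_{L^2(M)}$, $\|g\|_{C^k(\partial M)}$, and $\|h\|_{C^{k-1}(\partial M)}$. The main obstacle I anticipate is the careful boundary bookkeeping at every stage: whenever the ball meets $\partial M$ one must invoke the boundary version of Theorem \ref{thm:small-regularity}, localize the parabolic Schauder estimates along the boundary using the compatibility of $g$, $h$ with the Dirichlet and Neumann conditions, and propagate the boundary norms $\|g\|_{W^{j,p}(\partial M)}$, $\|h\|_{W^{j-1,p}(\partial M)}$ through each bootstrap step so that the final constant depends on $(g,h)$ only through the norms listed in the statement.
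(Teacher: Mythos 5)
Your proposal follows the same route as the paper: Lemma~\ref{Lemma:W^{4,2}} gives a uniform $W^{4,2}$ bound on $[T/2,T]$, from which one reads off enough integrability of $-f(u)=u_t+\Delta^2 u$ to start a parabolic $L^p$ bootstrap, finishing with parabolic Schauder estimates; the paper's proof is essentially this three-line sketch. Two details in your write-up deserve a flag, though neither is fatal. First, the step ``$u(\cdot,t')\in W^{4,p}(B^M_{R/4})$ for some $p>2$'' does not follow from Theorem~\ref{thm:small-regularity} alone with $\tau_2(u)=\partial_t u$, since Lemma~\ref{Lemma:W^{4,2}} only provides $\partial_t u\in L^2$ at each time slice; the gain beyond $p=2$ must come from the \emph{parabolic} $L^p$ theory applied in space-time (as the paper does), not from the time-frozen elliptic $\varepsilon$-regularity. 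Second, your proposed ``iterate Lemma~\ref{lem:03} on $\partial_t^k u$'' is unnecessary extra machinery and would itself require a fresh derivation — Lemma~\ref{lem:03} is tailored to $\partial_t u$; the standard parabolic $L^p$ and Schauder estimates already deliver the time regularity internally once $f(u)$ has H\"older-continuous coefficients, so no separate time-differentiated energy estimate is needed.
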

\proof It follows from Lemma \ref{Lemma:W^{4,2}} that $u$ is uniformly bounded in $W^{4,2}(B^M_{\frac{R}{2}}(x_0))$ for $\frac{T}{2}\leq t\leq T$. It follows that $u_t+\Delta^2u\in L^p(B^M_{\frac{R}{2}}(x_0)\times[\frac{T}{2},T])$ for any $1<p<\infty$. Therefore by the standard parabolic $L^p$-theory and Schauder estimate,
we can get the desired estimate.
\endproof

To prove our main theorem, we need to establish a lower bound estimate of the time interval for the existence of a smooth solution of \eqref{equ1}-\eqref{equ2}. First we have

\begin{lem}\label{lem:04}
Let $u\in V(M^T)$ be a solution of \eqref{equ1}-\eqref{equ2}. Assume that there exists $0<R<1$ such that $$\sup_{0\leq t<T}E(u(t);B^M_{2R}(x_0))\leq\epsilon_1.$$ Then we have for all $t\in [0,T)$
\begin{equation}\label{equation:01}
E(u(t);B^M_R(x_0))\leq CE(u(0);B^M_{2R}(x_0))+\frac{Ct}{R^4}+\frac{C\sqrt{t}}{ R^2}+CR^2.
\end{equation}
\end{lem}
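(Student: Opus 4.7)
The plan is to run a localized $F_2$ identity against a cutoff $\eta$ with $\eta\equiv 1$ on $B^M_R(x_0)$, $\mathrm{supp}\,\eta\subset B^M_{3R/2}(x_0)$, and $|\nabla^j\eta|\le CR^{-j}$, and to monitor
\[\Phi(t):=\int_M\eta^4|\Delta u(\cdot,t)|^2\,dv_g.\]
I would differentiate $\Phi$ in $t$ and substitute the flow equation $\partial_tu=-\Delta^2u-f(u)$. Integrating by parts twice transports both Laplacians from $\partial_tu$ onto $\eta^4\Delta u$, and the two resulting boundary contributions vanish because $\partial_tu|_{\partial M}=0$ and $\partial_\nu\partial_tu|_{\partial M}=0$ (the Dirichlet data $g,h$ are time-independent). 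Since $f(u)$ lies in the normal bundle of $N$ while $u_t\in TN$, $\langle f(u),u_t\rangle\equiv 0$ pointwise, so the nonlinear term drops out and one obtains
\[\Phi'(t)=-2\int_M\eta^4|u_t|^2+16\int_M\eta^3\nabla\eta\cdot\nabla\Delta u\cdot u_t+2\int_M\Delta(\eta^4)\,\Delta u\cdot u_t.\]

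Next I would estimate the two cross terms after integrating in time over $[0,t]$. The $\Delta(\eta^4)$ term is controlled via $|\Delta(\eta^4)|\le CR^{-2}$ and Cauchy--Schwarz; combined with Lemma \ref{lem:01}'s uniform bound $\|\Delta u(s)\|_{L^2(M)}\le C$ and the global dissipation $\int_0^t\!\int_M|u_t|^2\le F_2(u_0)/2$, it contributes at most $C\sqrt{t}/R^2$. For the $\nabla\Delta u$ term I would use Young's inequality to absorb one copy of $2\int\eta^4|u_t|^2$ into the good-sign term, leaving $\tfrac{C}{R^2}\int_0^t\!\int_{B^M_{3R/2}}|\nabla^3u|^2\,ds$. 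The interpolation \eqref{ine4} bounds $\int_{B^M_{3R/2}}|\nabla^3u|^2\le CR^2\int_{B^M_{3R/2}}|\nabla^4u|^2+CR^{-2}\int_{B^M_{3R/2}}|\nabla^2u|^2$, and Lemma \ref{lem:02} applied on a finite covering of $B^M_{3R/2}$ by balls $B^M_{R/4}(x_i)$ whose doubles sit inside $B^M_{2R}(x_0)$ upgrades the first summand to $CR^2\int_{B^M_{2R}}|u_t|^2+CR^{-2}$; the second summand is $\le C\epsilon_1/R^2$ by the small-energy hypothesis. After time integration, using $\int_0^t\!\int_M|u_t|^2\le C$, this contribution is at most $CR^2+Ct/R^4$.

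Combining and dropping the good term yields
\[\int_{B^M_R(x_0)}|\Delta u(t)|^2\le\Phi(t)\le\Phi(0)+CR^2+\frac{Ct}{R^4}+\frac{C\sqrt{t}}{R^2}\le CE(u(0);B^M_{2R}(x_0))+CR^2+\frac{Ct}{R^4}+\frac{C\sqrt{t}}{R^2}.\]
Standard elliptic $L^2$ theory converts $\|\Delta u\|_{L^2(B^M_R)}^2$ into $\|\nabla^2u\|_{L^2(B^M_R)}^2$ up to lower-order Poincar\'e corrections absorbed by the small-energy hypothesis. For the $L^4$ part of $E(u(t);B^M_R)$, I would apply the Sobolev embedding $W^{1,2}\hookrightarrow L^4$ on the $4$-dimensional ball to $\nabla u$, giving $\|\nabla u\|_{L^4(B^M_R)}^2\le C\|\nabla^2u\|_{L^2(B^M_R)}^2+CR^{-2}\|\nabla u\|_{L^2(B^M_R)}^2$, and the second summand is absorbed via an elementary bootstrap using the small-energy hypothesis for $R$ small. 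Together these give \eqref{equation:01}.

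The main obstacle is the careful accounting of $R$-powers throughout: each application of \eqref{ine4} or of Lemma \ref{lem:02} introduces $R^{-j}$ residues from the global bound $\|\nabla^2u\|_{L^2(M)}\le C$ and from the boundary data $g,h$ entering Lemma \ref{lem:02}, and these must be tracked (using smallness of $\epsilon_1$ and the global dissipation) so that the accumulated error lands precisely on the claimed $\frac{Ct}{R^4}+\frac{C\sqrt{t}}{R^2}+CR^2$ rather than a larger unadorned $O(1)$ constant.
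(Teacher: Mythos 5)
Your localized energy identity is the same as the paper's: multiply the flow equation by $\eta^4\partial_t u$, use that $f(u)\perp\partial_t u$ pointwise, integrate by parts (the boundary terms vanish since $\partial_t u=\partial_\nu\partial_t u=0$ on $\partial M$). However, your treatment of the dominant cross term $\int_0^t\!\int\nabla(\eta^4)\cdot\nabla\Delta u\cdot\partial_t u$ has a genuine gap. After Young's inequality this leaves $\frac{C}{R^2}\int_0^t\!\int_{B^M_{3R/2}}|\nabla^3 u|^2$; you apply \eqref{ine4} at fixed scale (interpolation weight $\epsilon'=R^2$) and then Lemma \ref{lem:02}. Tracking the $R$-powers carefully, the $\nabla^4 u$ piece contributes $\frac{C}{R^2}\cdot R^2\int_0^t\!\int_{B^M_{2R}}|\partial_t u|^2 = C\int_0^t\!\int_{B^M_{2R}}|\partial_t u|^2$, and the only bound available from Lemma \ref{lem:01} is the global dissipation constant $C$. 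That is an $O(1)$ term, not the $CR^2$ you wrote, it is not one of the terms allowed in \eqref{equation:01}, and it cannot be absorbed into the good term $-2\int_0^t\!\int\eta^4|\partial_t u|^2$ (it is not localized by $\eta$ and its prefactor is not small). The paper avoids this by choosing the interpolation weight to depend on $t$, namely $\epsilon=\sqrt{t}/R^2$, so that the coefficient in front of $\int_0^t\!\int|\nabla^4 u|^2$ becomes $C\sqrt{t}/R^2$; combined with Corollary \ref{cor1}'s bound $\int_0^t\!\int|\nabla^4 u|^2 \le C + ct/R^4$ this gives $C\sqrt{t}/R^2 + Ct^{3/2}/R^6 \le C\sqrt{t}/R^2 + Ct/R^4$ on the relevant range $t\lesssim R^4$ (for $t\gtrsim R^4$ the claim is trivial), while the $\nabla^2 u$ interpolation residue is $\frac{C}{\sqrt{t}R^2}\int_0^t\!\int|\nabla^2 u|^2 \le C\sqrt{t}\,\epsilon_1/R^2$. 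This $t$-dependent balancing is the key step your sketch is missing.

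A second gap is your passage from the bound on $\int_{B^M_R}|\Delta u(t)|^2$ to $E(u(t);B^M_R)$. The Sobolev step $\|\nabla u\|_{L^4(B^M_R)}^2 \lesssim \|\nabla^2 u\|_{L^2(B^M_R)}^2 + R^{-2}\|\nabla u\|_{L^2(B^M_R)}^2$ leaves $R^{-2}\int_{B^M_R}|\nabla u(t)|^2$, which the small-energy hypothesis alone does not control; the paper obtains a separate localized estimate for $\int\eta^4|\nabla u(t)|^2$ from the pointwise identity $\partial_t\big(\tfrac12|\nabla u|^2\big)=\nabla\langle\nabla u,\partial_t u\rangle - \langle\Delta u,\partial_t u\rangle$. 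The paper also passes through a harmonic extension $q$ of $g$ (so $u-q$ has zero Dirichlet trace) in order to apply the boundary elliptic estimate; the residues $\nabla q,\nabla^2 q$ are precisely what produce the $CR^2$ in \eqref{equation:01}, a term which your argument asserts but does not actually generate.
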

\begin{proof}
Multiplying \eqref{Heat-Flow} by $\eta^4\partial_tu$ and integrating by parts, we get
\begin{align*}
&\int_{M^t}\eta^4|\partial_tu|^2\,dxdt+\frac{1}{2}\int_{M^t}\eta^4\frac{\partial}{\partial t}|\Delta u|^2\,dxdt\\
&=
\int_{M^t}\Delta u \Delta \eta^4\partial_tu\,dxdt+2\int_{M^t}\Delta u\nabla\eta^4\nabla\partial_tu\,dxdt\\
&=
-\int_{M^t}\Delta u \Delta \eta^4\partial_tu\,dxdt-2\int_{M^t}\nabla\Delta u\nabla\eta^4\partial_tu\,dxdt\\
&\leq
\frac{1}{2}\int_{M^t}\eta^4|\partial_tu|^2\,dxdt+\frac{C}{R^4}\int_{{B^M_{2R}(x_0)}^t}|\Delta u|^2 \,dxdt+\frac{C}{R^2}\int_{{B^M_{2R}(x_0)}^t}|\nabla^3 u|^2 \,dxdt\\
&\leq
\frac{1}{2}\int_{M^t}\eta^4|\partial_tu|^2\,dxdt+\frac{C}{R^4}\int_{{B^M_{2R}(x_0)}^t}|\Delta u|^2 \,dxdt+C\epsilon\int_{{B^M_{2R}(x_0)}^t}|\nabla^4 u|^2 \,dxdt\\&\quad+\frac{C}{\epsilon R^4}\int_{{B^M_{2R}(x_0)}^t}|\nabla^2 u|^2 \,dxdt\\
&\leq
\frac{1}{2}\int_{M^t}\eta^4|\partial_tu|^2\,dxdt+\frac{Ct}{R^4}+\frac{C\sqrt{t}}{ R^2}\quad \mbox{ by taking }\epsilon=\frac{\sqrt{t}}{R^2}.
\end{align*}
Then we have
\begin{align}
\int_{M^t}\eta^4|\partial_tu|^2\,dxdt+\int_{M^t}\eta^4\frac{\partial}{\partial t}|\Delta u|^2\,dxdt\leq
\frac{Ct}{R^4}+\frac{C\sqrt{t}}{ R^2}.
\end{align}
Thus we obtain
\begin{align}
\int_{B^M_R(x_0)}|\Delta u|^2(t)\,dx\leq \int_{B^M_{2R}(x_0)}|\Delta u|^2(0)\,dx+\frac{Ct}{R^4}+\frac{C\sqrt{t}}{ R^2}.
\end{align}
Observe that
\begin{align*}
\partial_t(\frac{1}{2}|\nabla u|^2)&=\langle\nabla u, \nabla\partial_tu\rangle=\nabla\langle\nabla u, \partial_tu\rangle-\langle\Delta u, \partial_tu\rangle.
\end{align*}
Multiplying this equality by $\eta^4$, integrating it over $M$,
and applying integration by parts and  \eqref{Heat-Flow}, we obtain
\begin{align*}
\int_{M^t}\partial_t(\frac{1}{2}\eta^4|\nabla u|^2)dxdt&=\int_{M^t}\eta^4\nabla\langle\nabla u, \partial_tu\rangle dxdt- \int_{M^t}\eta^4\langle\Delta u, \partial_tu\rangle dxdt\\
&=-\int_{M^t}\nabla\eta^4\langle\nabla u, \partial_tu\rangle dxdt- \int_{M^t}\eta^4\langle\Delta u, \partial_tu\rangle dxdt\\
&\leq
\frac{R^2}{4}\int_{M^t}\eta^4|\partial_tu|^2dxdt+\frac{C}{R^4}\int_{M^t}\eta^2|\nabla u|^2dxdt+\frac{C}{R^2}\int_{M^t}\eta^4|\Delta u|^2dxdt\\
&\leq
\frac{R^2}{4}\int_{M^t}\eta^4|\partial_tu|^2dxdt+\frac{C}{R^2}\int_0^{t}(\int_{B_{2R}(x_0)}|\nabla u|^4dx)^{\frac{1}{2}}dt+\frac{Ct}{R^2}\\
&\leq
\frac{R^2}{4}\int_{M^t}\eta^4|\partial_tu|^2dxdt+\frac{Ct}{R^2}\\
&\leq \frac{R^2}{4}\int_{B^M_{2R}(x_0)}|\Delta u|^2(0)dx+C\sqrt{t}+\frac{Ct}{R^2}.
\end{align*}
Thus,
\begin{align}
\int_{B^M_R(x_0)}|\nabla u|^2(t)dx\leq \int_{B^M_{2R}(x_0)}|\nabla u|^2(0)dx+\frac{R^2}{4}\int_{B^M_{2R}(x_0)}|\Delta u|^2(0)dx+C\sqrt{t}+\frac{Ct}{R^2}.
\end{align}
Let $q\in C^{2+\alpha}(M,\mathbb{R}^N)$ be a harmonic function, satisfying
\begin{align*}
\begin{cases}
\Delta q=0 \ &\ {\rm{in}} \ M,\\
q=g\ &\ {\rm{on}}\ \partial M.
\end{cases}
\end{align*}
Then we have
\[
\|q\|_{C^{2+\alpha}(M)}\leq C(M)\|g\|_{C^{2+\alpha}(M)},
\]
and hence
\begin{align*}
&\int_{B^M_R(x_0)}|\nabla^2 (u-q)|^2(t)dx+(\int_{B^M_R(x_0)}|\nabla (u-q)|^4(t)dx)^{\frac{1}{2}}\\
&\leq
C\int_{B^M_{\frac{3}{2}R}(x_0)}|\Delta (u-q)|^2(t)dx+\frac{C}{R^2}\int_{B^M_{\frac{3}{2}R}(x_0)}|\nabla (u-q)|^2(t)dx\\
&\leq
C\int_{B^M_{2R}(x_0)}|\Delta u|^2(0)dx+\frac{C}{R^2}\int_{B^M_{2R}(x_0)}|\nabla u|^2(0)dx+\frac{Ct}{R^4}+\frac{C\sqrt{t}}{ R^2}+\frac{C}{R^2}\int_{B^M_{2R}(x_0)}|\nabla q|^2dx\\
&\leq
C\int_{B^M_{2R}(x_0)}|\Delta u|^2(0)dx+C(\int_{B^M_{2R}(x_0)}|\nabla u|^4(0)dx)^{\frac{1}{2}}+\frac{Ct}{R^4}+\frac{C\sqrt{t}}{ R^2}+CR^2.
\end{align*}
This implies
\begin{align*}
&\int_{B^M_R(x_0)}|\nabla^2 u|^2(t)dx+(\int_{B^M_R(x_0)}|\nabla u|^4(t)dx)^{\frac{1}{2}}\\
&\leq
C(\int_{B^M_R(x_0)}|\nabla^2 (u-q)|^2(t)dx+\int_{B^M_R(x_0)}|\nabla^2 q|^2(t)dx)\\&\quad+C((\int_{B^M_R(x_0)}|\nabla (u-q)|^4(t)dx)^{\frac{1}{2}}+(\int_{B^M_R(x_0)}|\nabla q|^4dx)^{\frac{1}{2}})\\
&\leq
C\int_{B^M_{2R}(x_0)}|\Delta u|^2(0)dx+C(\int_{B^M_{2R}(x_0)}|\nabla u|^4(0)dx)^{\frac{1}{2}}+\frac{Ct}{R^4}+\frac{C\sqrt{t}}{ R^2}+CR^2,
\end{align*}
which implies \eqref{equation:01}. This completes the proof. \end{proof}

According to Lemma \ref{lem:04}, we have

\begin{lem}\label{lem:LB}
There exists $0<\epsilon_2\ll\epsilon_1<\frac{inj_M}{2}$ such that if $u_0\in C^\infty(M,N)$,
$g\in C^\infty(\partial M, N)$, and $h\in C^\infty(\partial M, T_gN)$ satisfies
\begin{eqnarray}
\sup_{x\in M}\left(\int_{B^M_{2R}(x)}|\nabla^2u_0|^2\,dx+(\int_{B^M_{2R}(x)}|\nabla u_0|^4\,dx)^{\frac{1}{2}}\right)\leq\epsilon_2^2,
\end{eqnarray}
for some $R\in (0,\epsilon_2)$. Then there exists $T_1\geq O(R^4\epsilon_1^2)$ and a unique
solution $u\in C^\infty(M\times[0,T_1],N)$ to \eqref{equ1}-\eqref{equ2}.
\end{lem}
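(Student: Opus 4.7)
The plan is to combine standard short-time existence for the fourth-order parabolic system with a continuation argument driven by the local energy estimate of Lemma \ref{lem:04} and the higher-regularity output of Theorem \ref{thm:C^k}.

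First I would invoke the standard short-time existence theory for quasi-linear fourth-order parabolic boundary value problems: since the nonlinearity $f(u)$ depends polynomially on the jets of $u$ up to order three and the data $u_0,g,h$ are smooth (with the required compatibility at $\partial M$), there is a maximal time $T^{*}>0$ and a unique $u\in C^{\infty}(M\times[0,T^{*}),N)$ solving \eqref{equ1}-\eqref{equ2}. Next I would run a bootstrap on the local energy. Set
$$T':=\sup\Big\{t\in[0,T^{*})\ \Big|\ \sup_{x\in M}E\bigl(u(s);B^{M}_{2R}(x)\bigr)\leq\epsilon_{1}\ \text{ for all }s\in[0,t]\Big\}.$$
Since $E(u_{0};B^{M}_{2R}(x))\leq\epsilon_{2}^{2}\ll\epsilon_{1}$ by hypothesis, continuity forces $T'>0$. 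For each $x_{0}\in M$ and $t<T'$, Lemma \ref{lem:04} gives
$$E\bigl(u(t);B^{M}_{R}(x_{0})\bigr)\leq C\epsilon_{2}^{2}+\frac{Ct}{R^{4}}+\frac{C\sqrt{t}}{R^{2}}+CR^{2}.$$
To re-express this as a bound on balls of radius $2R$, I would cover each $B^{M}_{2R}(x)$ by a number $N_{0}=N_{0}(M)$ of balls $B^{M}_{R}(y_{i})$ independent of $x$ and $R$, and exploit subadditivity of both pieces of $E$ (applying $(a+b)^{1/2}\leq a^{1/2}+b^{1/2}$ on the $L^{4}$ part) to obtain
$$\sup_{x\in M}E\bigl(u(t);B^{M}_{2R}(x)\bigr)\leq CN_{0}\Big(\epsilon_{2}^{2}+R^{2}+\tfrac{t}{R^{4}}+\tfrac{\sqrt{t}}{R^{2}}\Big).$$

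I would then close the bootstrap by choosing constants in order: first pick $\epsilon_{2}$ so small that $CN_{0}(\epsilon_{2}^{2}+R^{2})\leq 2CN_{0}\epsilon_{2}^{2}\leq\epsilon_{1}/2$ (using $R<\epsilon_{2}$), and then set $T_{1}:=c\epsilon_{1}^{2}R^{4}$ with $c>0$ small enough that $CN_{0}(T_{1}R^{-4}+T_{1}^{1/2}R^{-2})\leq\epsilon_{1}/2$; the right-hand side stays strictly below $\epsilon_{1}$ on $[0,\min(T^{*},T_{1}))$, so $T'\geq\min(T^{*},T_{1})$. If $T^{*}<T_{1}$, then on $[0,T^{*})$ the small-energy hypothesis of Theorem \ref{thm:C^k} holds on balls of radius $R/4$, producing uniform $C^{k}$ bounds up to $T^{*}$ that permit smooth extension past $T^{*}$ by re-applying the short-time theory with the new initial data, contradicting maximality. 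Hence $T^{*}\geq T_{1}$, establishing smooth existence on $[0,T_{1}]$. Uniqueness follows from a standard energy estimate for $v=u_{1}-u_{2}$ (viewed in $\R^{L}$), using $v|_{\partial M}=\partial_{\nu}v|_{\partial M}=0$, the fact that the nonlinearity $f$ is locally Lipschitz on a tubular neighborhood of $N$, and Gronwall's inequality.

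The principal technical obstacle is the quantitative bookkeeping of constants in the bootstrap, and in particular verifying that the covering constant $N_{0}$ and the constant $C$ in Lemma \ref{lem:04} are uniform in $R\in(0,\epsilon_{2})$ so that the exponent $R^{4}\epsilon_{1}^{2}$ in the final time emerges cleanly; this scaling is forced precisely by the slowest-decaying term $C\sqrt{t}/R^{2}$ in the conclusion of Lemma \ref{lem:04}.
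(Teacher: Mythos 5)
Your argument follows the paper's route essentially verbatim: define a first exit time for the $\epsilon_1$ local-energy threshold, feed it into Lemma \ref{lem:04}, absorb the $CE(u_0;\cdot)+CR^2$ contributions using $\epsilon_2\ll\epsilon_1$ and $R<\epsilon_2$, read off $T_1 = O(\epsilon_1^2 R^4)$ from the $C\sqrt{t}/R^2$ term, and use Theorem \ref{thm:C^k} to provide the regularity needed to continue the smooth flow up to that exit time. You are, if anything, more careful than the paper about the covering argument required to pass from the $R$-ball conclusion of Lemma \ref{lem:04} back to the $2R$-ball smallness its hypothesis (and the $4R$-ball hypothesis of Theorem \ref{thm:C^k}) demands, and you also sketch the Gronwall uniqueness argument that the paper leaves implicit in this particular proof.
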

\begin{proof}
Let $T_1>0$ be the maximum time interval such that there exists a smooth solution $u\in C^{\infty}(M\times  [0,T_1),N)$ of \eqref{equ1}-\eqref{equ2}. Let $T'_1>0$ be the maximum time such that
\begin{equation}
\sup_{0\leq t\leq T'_1}\sup_{x\in M}E(u(t);B^M_{R}(x))\leq \epsilon_1.
\end{equation}
By Theorem \ref{thm:C^k}, we know $T_1\geq T'_1$.
By Lemma \ref{lem:04}, we get
\begin{align*}
\epsilon_1=E(u(T_1');B^M_{R}(x))&\leq CE(u(0);B^M_{2R}(x))+\frac{CT_1'}{R^4}+\frac{C\sqrt{T_1'}}{ R^2}+CR^2\\
&\leq \frac{\epsilon_1}{2}+\frac{CT_1'}{R^4}+\frac{C\sqrt{T_1'}}{ R^2}\\
&\leq \frac{3\epsilon_1}{4}+\frac{CT_1'}{\epsilon_1R^4}.
\end{align*}
This implies $T_1\geq T'_1\geq O(R^4\epsilon_1^2)$.
\end{proof}

\section{Existence results and behavior of solutions near singularities}

In this section, we show the existence of the global weak solution of the extrinsic biharmonic map flow, which is regular with the exception of at most finitely many time slices. We also study the behavior of the solution
near its singularities. Moreover, we get the existence of biharmonic maps with a fixed Dirichlet boundary data. Both Theorem \ref{main thm} and Theorem \ref{thm:main-02} will be proved in this section.

\begin{proof}[\textbf{Proof of Theorem \ref{main thm}}]

\smallskip
\noindent\textbf{Step 1}. From \cite{SU} and \cite{BVP} we see that there exists a sequence of maps
$\phi_{l}\in C^{4+\alpha}(M, N)$ such that $\phi_l=g, \partial_\nu\phi_l=h$ on $\partial M$, and
\begin{align*}
\phi_{l}\to u_0 \quad &{\rm{strongly}} \quad {\rm{in}} \quad W^{2,2}(M,N).
\end{align*}

\noindent\textbf{Step 2}. The short-time existence. Since $\phi_l\to u_0$ in $W^{2,2}(M,N)$,
there exists a $R\in (0,\frac{inj_M}{2})$ such that
$$\sup_l\sup_{x\in M}\left(\int_{B_{2R}(x)}|\nabla^2\phi_l|^2\,dx+(\int_{B_{2R}(x)}|\nabla \phi_l|^4\,dx)^{\frac{1}{2}}\right)\leq\epsilon_2^2,$$
where $\epsilon_2$ is given in Lemma \ref{lem:LB}. By the short-time existence theory in \cite{Lamm}, there exist $T_l>0$ and $u_l\in C^{4+\alpha,1+\frac{\alpha}{4}}(M\times[0,T_l),N)$ which solves  \eqref{Heat-Flow} with the boundary-initial data $(g,h)$. Then Lemma \ref{lem:LB} implies that $T_l\geq O(R^4\epsilon_1^2)$ and Theorem \ref{thm:C^k} implies that we have uniformly $C_{loc}^{4+\alpha,1+\frac{\alpha}{4}}$ estimates of $u_l$ in $M\times(0,O(R^4\epsilon_1^2)]$. Hence we may assume that $u_l$ converges to $u$ weakly in $W^{2,2}(M,N)$, strongly in $W^{1,2}(M,N)$ and in $C^{4+\alpha,1+\frac{\alpha}{4}}(M\times[\rho,O(R^4\epsilon_1^2)],N)$ for any $\rho>0$. It is clear that $u\in C_{loc}^{4+\alpha,1+\frac{\alpha}{4}}(M\times(0,O(R^4\epsilon_1^2)),N)$ is a classical solution of \eqref{Heat-Flow}. The short-time existence theory guarantees the existence of a solution to \eqref{Heat-Flow} using $u(O(R^4\epsilon_1^2))$ as the new initial data so that the solution can be continued to a larger time interval. Assume that $T_1$ is the maximum time interval such that $u\in C_{loc}^{4+\alpha,1+\frac{\alpha}{4}}(M\times(0,T_1),N)$ solves \eqref{equ1}-\eqref{equ2}. Repeating this argument, the solution can be continued  until the first time of energy concentration excels $\epsilon_1$, that is, the condition
\begin{equation}\label{equation:04}
\lim_{r\downarrow0}\lim_{t\uparrow T_1}\sup_{x\in M}E(u(t),B_{r}(x))>\epsilon_1
\end{equation}
reaches. Set
\begin{equation}\label{equation:05}
S(T_1):=\Big\{(x,T_1)\ |\ x\in M,\ \lim_{r\to 0}\limsup_{t\uparrow T_1}E(u(t),B_{r}(x))>\epsilon_1\Big\},
\end{equation}
which is called as the singularity set of $u$ at time $T_1$. It is an open question if $S(T_1)$ is
a finite set.

\noindent{\textbf{Step 3}}. Behavior of the solution $u$ near its first singular time $T_1$.
By the standard blowup argument, there exist sequences $t_i^1\nearrow T_1$, $x_i^1\to x_0\in\overline M$, and
$r_i^1\to 0$ such that
\begin{align}\label{equation:blowup-position}
E(u(t_i^1),B_{r_i^1}^M(x_i^1))=\sup_{\substack{(x,t)\in \overline M\times [T_1-\delta^2,t_i^1]\\
r\leq r_i^1}}E(u(t),B_{r}^M(x))=\frac{\epsilon_1}{C_0},
\end{align}
where $C_0$ is a positve constant to be determined later. Assume that $B_{2r_i^1}(x_0)$ is covered by $m$ balls of radius $r_i^1$ constained in $\overline M$ and let $C_0=m$, then we see that $\sup_{T_1-\delta^2\leq t<T_1}E(u(t);B_{2r^1_i}^M(x_0))\leq \epsilon_1$.
By Lemma \ref{lem:04}, for any $T_1-\delta^2\leq s\leq t_i^1<T_1$, we have
\begin{align*}
E(u(t_i^1);B_{r_i^1}^M(x_0))\leq CE(u(s);B_{2r^1_i}^M(x_0))+C\frac{t_i^1-s}{(r_i^1)^4}+C\frac{\sqrt{t_i^1-s}}{(r_i^1)^2}+C(r_i^1)^2.
\end{align*}
Set $T=\frac{\epsilon_1^2}{16C^2C_0^2}$. Then we have
\begin{equation}\label{equ3}
E(u(s);B_{2r_i^1}^M(x_0))\geq\frac{\epsilon_1}{2CC_0}
\end{equation}
for any $s\in [t_i^1-T(r_i^1)^4,t_i^1]$, when $i$ is sufficiently large.

\noindent{\textbf{Case 1}}. $\displaystyle\limsup_{i\to\infty}\frac{{\rm{dist}}(x_i^1,\partial M)}{r_i^1}\to \infty.$
By passing to a subsequence, we may assume
$\lim_{i\to\infty}\frac{{\rm{dist}}(x_i^1,\partial M)}{r_i^1}\to \infty$.
Assume $t_i^1-\frac{\delta^2}{4}>T_1-\delta^2$ and define
\[
B_i:=\big\{x\in\mathbb{R}^4|x_i^1+r_i^1x\in B_\delta^M(x_0)\big\},
\]
and
\begin{align*}
v_i(x,t):&=u(x_i^1+r_i^1x,t_i^1+(r_i^1)^4t), \ \forall\ x\in B_i, \ -\frac{\delta^2}{4(r_i^1)^4}\le t\le 0.
\end{align*}
It is easy to see that $B_i\rightarrow\R^4$ as $i\rightarrow\infty$.
Then $v_i$ satisfies
\begin{eqnarray}\label{equation:blowup}
\partial_tv_i+\Delta^2v_i=-f(v_i),
\end{eqnarray}
along with the boundary condition
\begin{eqnarray}
\begin{cases}
v_i(x,t)=g(x_i^1+r_i^1x),\quad &if\quad x_i^1+r_i^1x\in \partial M;\\
\partial_\nu v_i(x,t)=r_i^1h(x_i^1+r_i^1 x),\quad &if\quad x_i^1+r_i^1x\in \partial M.
\end{cases}
\end{eqnarray}
By Lemma \ref{lem:01}, we have
\begin{align}\label{inequality:01}
\int_{-T}^0\int_{B_i}|\partial_tv_i|^2\,dxdt\leq\int_{t_i^1-(r_i^1)^4T}^{t_i^1}\int_M|\partial_tu|^2\,dv_gdt
\to 0 \quad {\text as}\ \ i\to \infty,
\end{align}
and
\begin{equation}\label{inequality:02}
\sup_{\frac{\delta^2}{4(r_i^1)^4}\leq t\leq0}E(v_i,B_i)\leq\sup_{T_1-\delta^2\leq t\leq T_1}E(u)\leq C.
\end{equation}
By \eqref{equation:blowup-position}, we can see that
\begin{align*}
\sup_{-T\leq t\leq 0}\sup_{x\in B_i}E(v_i,B_1(x)\cap B_i)\leq
\sup_{\substack{(x,t)\in \overline M\times [T_1-\delta^2,t_i^1]\\
\ r\leq r_i^1}}E(u(t),B_{r}(x))=\frac{\epsilon_1}{2C_0}.
\end{align*}
Hence, for any $x\in \mathbb{R}^4$, when $i$ is sufficiently large, we have
\begin{equation}\label{equ4}
\sup_{-T\leq t\leq 0}E(v_i,B_1(x))\leq\frac{\epsilon_1}{2C_0}.
\end{equation}
Combining \eqref{equ4} with Theorem \ref{thm:C^k}, we have
\begin{equation}\label{equ5}
\sup_{-\frac{T}{2}\leq t\leq 0}\|v_i(\cdot,t)\|_{C^{4+\alpha}(B_{1/2}(x))}\leq C,
\end{equation}
which yields
\begin{equation}\label{equ6}
\sup_{-\frac{T}{2}\leq t\leq 0}\|v_i(\cdot,t)\|_{C^{4+\alpha}(B_R)}\leq C(R), \ \forall\ R>0.
\end{equation}
From \eqref{inequality:01} and \eqref{equ6}, we can find $\sigma_i\in[-\frac{T}{2},0]$ such that as $i\to\infty$, there holds
\begin{align}
\int_{B_i}|\partial_tv_i|^2(x,\sigma_i)\,dx \to 0
\end{align}
and
\begin{equation}
\|v_i(\cdot,\sigma_i)\|_{C_{loc}^{4+\alpha}(\mathbb{R}^4)}\leq C.
\end{equation}
Therefore, there exists a subsequence of $v_i(\cdot,\sigma_i)$ and a limit map
$v\in C^4(\mathbb{R}^4, N)$ such that
\begin{align}\label{equation:02}
v_i(\cdot,\sigma_i)&\to v\quad {\rm{in}} \quad C^4(B_R), \ \forall \ R>0.
\end{align}
Setting $t=\sigma_i$ in the equation \eqref{equation:blowup} and letting $i\to\infty$, it is easy to see that $v$ is a biharmonic map with
\[
\frac{\epsilon_1}{2CC_0}\leq E(v;\mathbb{R}^4)\leq C,
\]
where the above inequality follows from \eqref{equ3} and \eqref{inequality:02}. Taking $t_i^1+(r_i^1)^4\sigma_i$ as the new time sequence, then we get that
\begin{align*}
u_i(x)&=v_i(x,\sigma_i)=u(x_i^1+r_i^1x,t_i^1+(r_i^1)^4\sigma_i)
\end{align*}
is the desired sequence in the theorem.

\noindent\textbf{Case 2}. $\displaystyle\limsup_{i\to\infty}\frac{dist(x_i^1,\partial M)}{r_i^1}< \infty$.
After taking a subsequence, we may assume $\frac{dist(x_i^1,\partial M)}{r_i^1}\to a$ as $i\to\infty$. Then
\[
B_i\to \mathbb{R}^4_a:=\big\{(x',x^4)|x^4\geq -a\big\},
\]
where $x':=(x^1,x^2,x^3)\in\mathbb{R}^3$. Noting that for any $x\in\big\{x^4=-a\big\}$, $x_i^1+r_i^1x\to x_0$.
Moreover,
\begin{eqnarray}
\begin{cases}
v_i(x,t)=g(x_i^1+r_i^1x),\quad &if\quad x_i^1+r_i^1x\in \partial M;\\
\partial_\nu v_i(x,t)=r_i^1h(x_i^1+r_i^1x),\quad &if\quad x_i^1+r_i^1x\in \partial M.
\end{cases}
\end{eqnarray}
By Theorem \ref{thm:C^k} and \eqref{equation:blowup-position}, for any $B_R(0)\subset \mathbb{R}^4,R>0$, we have
\begin{equation}
\sup_{-\frac{T}{2}\leq t\leq 0}\|v_i(\cdot,t)\|_{C^{4+\alpha}(B_R(0)\cap B_i)}
\leq C.
\end{equation}
Using a similar argument as in \textbf{Case 1}, we can obtain $v\in C^4(\mathbb{R}^4_a, N)$ satisfying
\begin{align}
\frac{\epsilon_1}{2CC_0}\leq E(v;\mathbb{R}^4_a)\leq C,
\end{align}
and a sequence $\sigma_i\in[-\frac{T}{2},0]$ such that as $i\to\infty$, there hold
\begin{align}\label{equation:03}
\|v_i(\cdot,\sigma_i)-v\|_{C^4(B_i\cap B_R(0))}&\to 0,
\end{align}
for any $R>0$.
Moreover, $v$ is a biharmonic map with the boundary condition
\begin{eqnarray}
\begin{cases}
v(x)=g(x_0),\quad &{\rm{on}}\quad \partial \mathbb{R}^4_a;\\
\partial_\nu v(x) =0,\quad &{\rm{on}}\quad \partial \mathbb{R}^4_a.
\end{cases}
\end{eqnarray}

\noindent\textbf{Step 4}. Global existence of weak solutions.
Let $(x_0,T_1)\in S(T_1)$ be as in Step 3. Then we claim
\begin{eqnarray}\label{ine18}
\lim_{r\to0}\limsup_{t\uparrow T_1}\int_{B_r(x_0)}|\Delta u(\cdot,t)|^2\,dx\geq\epsilon_0^2.
\end{eqnarray}
In fact, by \eqref{equation:02}, \eqref{equation:03} and Theorem \ref{thm:Gap-theorem}, we have
\begin{align*}
\epsilon_0^2\leq \lim_{R\to\infty}\int_{B_R(0)}|\Delta v|^2dx&=\lim_{R\to\infty}\lim_{i\to\infty}\int_{B_R(0)}|\Delta v_i|^2(\cdot,\sigma_i)dx\\
&=\lim_{R\to\infty}\lim_{i\to\infty}\int_{B_{r_iR}(x_i)}|\Delta u|^2(\cdot,t_i+r_i^4\sigma_i)dx\\
&\leq
\lim_{r\to0}\limsup_{t\uparrow T_1}\int_{B_r(x_0)}|\Delta u(\cdot,t)|^2dx.
\end{align*}
Next, we claim that there is a unique weak limit $u(\cdot,T_1)\in W^{2,2}(M,N)$ such that
$$\lim_{t\uparrow T_1}u(\cdot,t)=u(\cdot,T_1)\ {\rm{weakly\ in}}\ W^{2,2}(M,N).$$
In fact, by Lemma \ref{lem:01}, for any sequence $t_i\to T_1$, there exists a subsequence (also denoted by $t_i$) such that $u(\cdot,t_i)\to u(\cdot,T_1)$ weakly in $W^{2,2}(M)$ as $i\to\infty$. So, we just need to show the weak limit $u(\cdot,T_1)$ is independent of the choice of the time sequences. Let $s_i\to T_1$ be  another time sequence and the corresponding weak limit  $\widehat{u}(\cdot,T_1)$.  Note that
\begin{align}\label{equation:06}
&\int_M|u(\cdot,T_1)-\widehat{u}(\cdot,T_1)|^2\,dx\notag\\
&=\int_M\langle u(\cdot,T_1)-\widehat{u}(\cdot,T_1),
u(\cdot,T_1)-u(\cdot,t_i)\rangle \,dx+\int_M\langle u(\cdot,T_1)-\widehat{u}(\cdot,T_1),
u(\cdot,t_i)\notag\\&\quad-u(\cdot,s_i)\rangle \,dx+\int_M\langle u(\cdot,T_1)-\widehat{u}(\cdot,T_1),
u(\cdot,s_i)-\widehat{u}(\cdot,T_1)\rangle \,dx.
\end{align}
Since
\begin{align*}
\int_M|u(\cdot,t_i)-u(\cdot,s_i)|^2\,dx
=\int_M|\int_{s_i}^{t_i}\frac{\partial u}{\partial t}\,dt|^2\,dx
\leq |s_i-t_i||\int_{M_{s_i}^{t_i}}|\frac{\partial u}{\partial t}|^2\,dxdt|,
\end{align*}
$\displaystyle\int_{M^{T_1}}|\frac{\partial u}{\partial t}|^2\,dxdt\leq C$ (see Lemma \ref{lem:01}),
$u(\cdot,t_i)\rightharpoonup u(\cdot,T_1),\ u(\cdot,s_i)\rightharpoonup \widehat {u}(\cdot,T_1)$
weakly in $W^{2,2}(M)$
by sending $i\to\infty$ in \eqref{equation:06},
we obtain $$\int_M|u(\cdot,T_1)-\widehat{u}(\cdot,T_1)|^2\,dx=0.$$
Thus $u(\cdot,T_1)=\widehat{u}(\cdot,T_1)$.  It is easy to see
that
$$\int_M|\Delta u(\cdot, T_1)|^2\,dx
\le \int_M |\Delta u_0|^2\,dx-\epsilon_0^2.
$$
Now we use $u(\cdot,T_1)$ as the initial condition and ($g,h$) as the boundary condition
to extend the above solution beyond $T_1$ to obtain a weak solution $u:M\times(0,T_2)\to N$ for some $T_2>T_1$ by piecing together the solutions at $T_1$. Then we see that $u\in C_{loc}^{4+\alpha,1+\frac{\alpha}{4}}(M\times((0,T_2)\setminus\{T_1\}),N)$. Iterating this process, we obtain a global solution defined on $M\times [0,\infty)$. Let $\{T_k\}_{k=1}^K$ be all the possible singular times.
Then we have
\begin{align*}
\int_{M}|\Delta u(\cdot,T_K)|^2\,dx
&\leq\liminf_{t_i\uparrow T_K}\int_M|\Delta u(\cdot,t_i)|^2\,dx-\epsilon_0^2\\
&\leq
\int_M|\Delta u_0|^2\,dx-K\epsilon_0^2,
\end{align*}
which implies
\[
K\leq \frac{\int_M|\Delta u_0|^2\,dx}{\epsilon_0^2}.
\]
Hence there are at most finitely many singular time slices.

\noindent\textbf{Step 5}. Uniqueness.
The only thing left to be proven is uniqueness and we only need to prove uniqueness of the short time solution constructed above and the full uniqueness follows by iteration. Let $u,v:[0,t_0)\to N$ be two constructed
smooth (for $t>0$) solutions and set $w:=u-v$. Then
\begin{eqnarray*}
\partial_tw+\Delta^2w&=&f(u)-f(v)
\\&=&\nabla^3u\#\nabla u+\nabla^2 u\#\nabla^2 u+\nabla^2 u\#\nabla u\#\nabla u+\nabla u\#\nabla u\#\nabla u\#\nabla u
\\&&-\big(\nabla^3v\#\nabla v+\nabla^2 v\#\nabla^2 v+\nabla^2 v\#\nabla v\#\nabla v+\nabla v\#\nabla v\#\nabla v\#\nabla v\big).
\end{eqnarray*}
Multiply this equation with $w$ and integrate over $(0,s)\times  M$. By partial integration ($w=\partial_\nu w=0$ on $\partial M$ for any $t\in (0,s)$), we can get rid of derivatives of order $>2$ (cf. \cite{Gastel}). Simplifying terms by using Young's inequality we get
\begin{eqnarray}\label{ine19}
&&\frac{1}{2}\int_M|w(s)|^2+\int_{M^s}|\Delta w|^2\nonumber
\\&\leq&C\sum_{k=1}^2\int_{M^s}|w|^2(|\nabla^ku|+|\nabla^kv|)^\frac{4}{k}+C\sum_{k=1}^2\int_{M^s}|\nabla w|^2(|\nabla^ku|+|\nabla^kv|)^\frac{2}{k}\nonumber
\\&&+C\sum_{l=0}^1\sum_{k=1}^{2-l}\int_{M^s}|\nabla^2w||\nabla^lw|(|\nabla^ku|+|\nabla^kv|)^\frac{2-l}{k}\nonumber
\\&:=&I_4+I_5+I_6.
\end{eqnarray}
To make it more clear how the above inequality is obtained, let us give the details of the estimates of the highest order term of $(f(u)-f(v))w$ (we denote it by $\varphi(u)\cdot\nabla^3u\cdot \nabla u$) as follows.
\begin{align*}
&\int_{M^s}(\varphi(u)\cdot\nabla^3u\cdot \nabla u-\varphi(v)\cdot\nabla^3v\cdot\nabla v)w
\\&=\int_{M^s}[(\varphi(u)-\varphi(v))\cdot\nabla^3u\cdot \nabla u+\varphi(v)\cdot\nabla^3w\cdot \nabla u+\varphi(v)\cdot\nabla^3v\cdot \nabla w ]w
\\&=\int_{M^s}\nabla^2u\#\nabla w\# \nabla u\# w+\nabla^2u\#\nabla^2 u\nabla\# w^2+\nabla^2w\#\nabla u\# \nabla v\# w+\nabla^2w\#\nabla^2 u\# w
\\&\quad+\nabla^2w\#\nabla u\# \nabla w+\nabla^2v\#\nabla v\# \nabla w\# w+\nabla^2v\# \nabla^2 w\# w+\nabla^2v\#\nabla w\# \nabla w
\\&\leq I_4+I_5+I_6,
\end{align*}
where $$\varphi(u)\cdot\nabla^3u\cdot \nabla u:=\varphi^{ijkl}_{AB}(u)\cdot\nabla_{ijk}^3u^A\cdot \nabla_l u^B$$ and the last inequality follows from Young's inequality and following property
\begin{align*}
|\nabla^k\varphi|\leq C(N)\quad {\rm{and}} \quad |\varphi(u)-\varphi(v)|\leq C(N)(u-v).
\end{align*}
In the following, let's estimate the right hand side of \eqref{ine19}. By H\"older's inequality, the Sobolev inequality, the Poincar\'e inequality and partial integration,  we get
\begin{align}\label{ine21}
\int_{M^s}|w|^2(|\nabla u|+|\nabla v|)^4
&\leq(\int_{M^s}|w|^4)^\frac{1}{2}(\int_{M^s}|\nabla u|^8+|\nabla v|^8)^\frac{1}{2}\nonumber
\\&\leq C(\int_{M^s}|\nabla u|^8+|\nabla v|^8)^\frac{1}{2}(\int_0^s(\int_M|\nabla w|^2+|w|^2)^2)^\frac{1}{2}\nonumber
\\&\leq C(\int_{M^s}|\nabla u|^8+|\nabla v|^8)^\frac{1}{2}(\int_0^s(\int_{M}|\nabla w|^2)^2)^\frac{1}{2}\nonumber
\\&\leq C(\int_{M^s}|\nabla u|^8+|\nabla v|^8)^\frac{1}{2}(\int_0^s\int_M|w|^2\int_M|\nabla^2w|^2)^\frac{1}{2}\nonumber
\\&\leq C(\int_{M^s}|\nabla u|^8+|\nabla v|^8)^\frac{1}{2}(\sup_{t\in (0,s)}\int_M|w(t)|^2)^\frac{1}{2}(\int_{M^s}|\nabla^2w|^2)^\frac{1}{2}\nonumber
\\&\leq C(\int_{M^s}|\nabla u|^8+|\nabla v|^8)^\frac{1}{2}(\sup_{t\in (0,s)}\int_M|w(t)|^2+\int_{M^s}|\nabla^2w|^2),
\end{align}
and similarly we have
\begin{align}\label{ine22}
&\int_{M^s}|w|^2(|\nabla^2u|+|\nabla^2v|)^2
\notag\\ &\leq C(\int_{M^s}|\nabla^2u|^4+|\nabla^2v|^4)^\frac{1}{2}(\sup_{t\in (0,s)}\int_M|w(t)|^2+\int_{M^s}|\nabla^2w|^2).
\end{align}
Now let's estimate $I_5$ term by term,
\begin{align}
\int_{M^s}|\nabla w|^2|\nabla u|^2\nonumber
&=-\int_{M^s}w\Delta w|\nabla u|^2-\int_{M^s}w\nabla w\nabla u\cdot\nabla^2u\nonumber
\\&\leq \epsilon\int_{M^s}|\nabla^2w|^2+C(\epsilon)\int_{M^s}w^2|\nabla u|^4+\frac{1}{2}\int_{M^s}|\nabla w|^2|\nabla u|^2+\int_{M^s}w^2|\nabla^2u|^2.\nonumber
\end{align}
Therefore we get
$$\int_{M^s}|\nabla w|^2|\nabla u|^2\leq\epsilon\int_{M^s}|\nabla^2w|^2+C(\epsilon)\int_{M^s}w^2|\nabla u|^4+\int_{M^s}w^2|\nabla^2u|^2.$$
Thus by \eqref{ine21} and \eqref{ine22} we have
\begin{eqnarray}
&&\int_{M^s}|\nabla w|^2(|\nabla u|^2+|\nabla v|^2)\nonumber
\\&\leq&\epsilon\int_{M^s}|\nabla^2w|^2+C(\epsilon)((\int_{M^s}|\nabla u|^8+|\nabla v|^8)^\frac{1}{2}+(\int_{M^s}|\nabla^2u|^4+|\nabla^2v|^4)^\frac{1}{2})\nonumber
\\&&\times(\sup_{t\in (0,s)}\int_M|w(t)|^2+\int_{M^s}|\nabla^2w|^2).\nonumber
\end{eqnarray}
In addition,
\begin{align*}
\int_{M^s}|\nabla w|^2|\nabla^2u|
&= -\int_{M^s}w\Delta w|\nabla^2u|-\int_{M^s}w\nabla w\cdot\nabla|\nabla^2u|
\\&\leq \epsilon\int_{M^s}|\nabla^2w|^2+C(\epsilon)\int_{M^s}w^2|\nabla^2u|^2-\frac{1}{2}\int_{M^s}\nabla w^2\cdot\nabla|\nabla^2u|
\\&=\epsilon\int_{M^s}|\nabla^2w|^2+C(\epsilon)\int_{M^s}w^2|\nabla^2u|^2+\frac{1}{2}\int_{M^s}w^2\Delta|\nabla^2u|
\\&\leq\epsilon\int_{M^s}|\nabla^2w|^2+C(\epsilon)\int_{M^s}w^2|\nabla^2u|^2+C(\int_{M^s}w^4)^\frac{1}{2}(\int_{M^s}|\nabla^4u|^2)^\frac{1}{2}
\\&\leq\epsilon\int_{M^s}|\nabla^2w|^2+C(\epsilon)(\int_{M^s}|\nabla^2u|^4)^\frac{1}{2})(\sup_{t\in (0,s)}\int_M|w(t)|^2+\int_{M^s}|\nabla^2w|^2)
\\&+C(\int_{M^s}|\nabla^4u|^2)^\frac{1}{2}(\sup_{t\in (0,s)}\int_M|w(t)|^2+\int_{M^s}|\nabla^2w|^2).
\end{align*}
Thus we get
\begin{eqnarray}
&&\int_{M^s}|\nabla w|^2(|\nabla^2u|+|\nabla^2v|)\nonumber
\\&\leq&\epsilon\int_{M^s}|\nabla^2w|^2+C(\epsilon)(\int_{M^s}|\nabla^2u|^4+|\nabla^2v|^4)^\frac{1}{2}(\sup_{t\in (0,s)}\int_M|w(t)|^2+\int_{M^s}|\nabla^2w|^2)\nonumber
\\&&+C(\int_{M^s}|\nabla^4u|^2+|\nabla^4v|^2)^\frac{1}{2}(\sup_{t\in (0,s)}\int_M|w(t)|^2+\int_{M^s}|\nabla^2w|^2).\nonumber
\end{eqnarray}
In summation of the above estimates we have
\begin{eqnarray}
I_5&\leq&\epsilon\int_{M^s}|\nabla^2w|^2+C(\epsilon)(\int_{M^s}|\nabla u|^8+|\nabla v|^8)^\frac{1}{2}(\sup_{t\in (0,s)}\int_M|w(t)|^2+\int_{M^s}|\nabla^2w|^2)\nonumber
\\&&+C(\epsilon)(\int_{M^s}|\nabla^2u|^4+|\nabla^2v|^4)^\frac{1}{2}(\sup_{t\in (0,s)}\int_M|w(t)|^2+\int_{M^s}|\nabla^2w|^2)\nonumber
\\&&+C(\int_{M^s}|\nabla^4u|^2+|\nabla^4v|^2)^\frac{1}{2}(\sup_{t\in (0,s)}\int_M|w(t)|^2+\int_{M^s}|\nabla^2w|^2).
\end{eqnarray}
We are left to estimate the last summation $I_6$ in \eqref{ine19}. Note that by Young's inequality
\begin{eqnarray}\label{ine20}
I_6\leq\epsilon\int_{M_o^s}|\nabla^2w|^2+C(\epsilon)(I_4+I_5).
\end{eqnarray}
Therefore from inequalities \eqref{ine19}-\eqref{ine20} we obtain
\begin{eqnarray}
&&\int_M|w(s)|^2+\int_{M^s}|\Delta w(s)|^2\nonumber
\\&\leq&\epsilon\int_{M^s}|\nabla^2w|^2+C(\epsilon)(\int_{M^s}|\nabla u|^8+|\nabla v|^8)^\frac{1}{2}(\sup_{t\in (0,s)}\int_M|w(t)|^2+\int_{M^s}|\nabla^2w|^2)\nonumber
\\&&+C(\epsilon)(\int_{M^s}|\nabla^2u|^4+|\nabla^2v|^4)^\frac{1}{2}(\sup_{t\in (0,s)}\int_M|w(t)|^2+\int_{M^s}|\nabla^2w|^2)\nonumber
\\&&+ C(\epsilon)(\int_{M^s}|\nabla^4u|^2+|\nabla^4v|^2)^\frac{1}{2}(\sup_{t\in (0,s)}\int_M|w(t)|^2+\int_{M^s}|\nabla^2w|^2).
\end{eqnarray}
By  the standard elliptic estimate, noting that $w=0$ on $\partial M$,
we have $\int_M|\nabla^2w|^2\leq C\int_M|\Delta w|^2$.
Choosing $\epsilon=\frac{1}{2C}$, we obtain
\begin{align}
&\int_M|w(s)|^2+\int_{M^s}|\nabla^2w(s)|^2\nonumber
\\&\leq C(\int_{M^s}|\nabla u|^8+|\nabla v|^8)^\frac{1}{2}(\sup_{t\in (0,s)}\int_M|w(t)|^2+\int_{M^s}|\nabla^2w|^2)\nonumber
\\&\quad+C(\int_{M^s}|\nabla^2u|^4+|\nabla^2v|^4)^\frac{1}{2}(\sup_{t\in (0,s)}\int_M|w(t)|^2+\int_{M^s}|\nabla^2w|^2)\nonumber
\\&\quad+ C(\int_{M^s}|\nabla^4u|^2+|\nabla^4v|^2)^\frac{1}{2}(\sup_{t\in (0,s)}\int_M|w(t)|^2+\int_{M^s}|\nabla^2w|^2).
\end{align}
 Hence for solutions $u,v\in V(M^T)$ by the interpolation inequalities of Lemma \ref{Lem:interpolation} we see that we can choose $s$ small enough such that $C(\int_{M^s}|\nabla u|^8+|\nabla v|^8)^\frac{1}{2}$, $C(\int_{M^s}|\nabla^2u|^4+|\nabla^2v|^4)^\frac{1}{4}$ and $C(\int_{M^s}|\nabla^4u|^2+|\nabla^4v|^2)^\frac{1}{2}$ are all smaller than $\frac{1}{4}$. Without loss of generality, we assume that
$$\sup_{t\in [0,s]}\int_M|w(t)|^2=\int_M|w(s)|^2.$$
Hence we obtain that $\sup_{t\in [0,s]}\int_M|w(t)|^2=0,$ i.e., $u\equiv v$ on $[0,s)$.
\end{proof}
\begin{proof}[\bf{Proof of Theorem \ref{thm:main-02}}]
By Theorem \ref{main thm}, we see that there exists a time sequence $\{t_i\}$, $t_i\to +\infty$ as $i\to+\infty$, such that $\frac{\partial u(t_i,\cdot)}{\partial t_i}\to 0$ in $L^2(M,N)$ and $u(\cdot,t_i)$ converges weakly in $W^{2,2}(M,N)$ to a map $u_\infty \in W^{2,2}(M,N)$ with Dirichlet boundary data $u=g$ and $\partial_\nu u=h$ on $\partial M$, where $g\in C^{4+\alpha}(\partial M,N)$ and $h\in C^{3+\alpha}(\partial M,T_gN)$. Denote $u(t_i)=u_i$ and $g_i=-\frac{\partial u_i}{\partial t_i}$, then $\Delta^2u_i+f(u_i)=g_i$. Note that $g_i\to 0$ in $L^2(M,N)$ and hence in $(W^{2,2}(M,N))^\ast$,  by the weak compactness theorem of Zheng \cite{Zh}, we see that $u_\infty \in W^{2,2}(M,N)$ is a biharmonic map. Then we see that $u\in C^{4+\alpha, 1+\frac{\alpha}{4}}(\overline M,N)$ by the interior regularity theorem of Wang \cite{W2} and boundary regularity theorem of Lamm and Wang \cite{Lamm3}.
\end{proof}

\quad\\

\noindent\textbf{Acknowledgement}. Huang is supported by NSF of Shanghai grant 16ZR1423800. Liu is supported by the NSF of China (No.11471299). Luo is supported by the NSF of China (No.11501421), the Postdoctoral Science Foundation of China (No.2015M570660), and the Project-sponsored by SRF for ROCS, SEM. Wang is partially supported by NSF.

 {}

\vspace{0.2cm}\sc

\noindent NYU-ECNU Institute of Mathematical Sciences at NYU Shanghai,
3663 Zhongshan Road North,
Shanghai,
200062,
China; {\tt th79@nyu.edu}
\vspace{0.2cm}\sc

\noindent Department of Mathematics, Tsinghua University, HaiDian Road, BeiJing 100084, China;
Max-planck institut f\"ur mathematik In den naturwissenschaft, Inselstr.22, D-04103, Leipzig, Germany;
{\tt liulei1988@mail.tsinghua.edu.cn}~{\em or}~{\tt leiliu@mis.mpg.de}

\vspace{0.2cm}\sc

\noindent School of mathematics and statistics, Wuhan university, Wuhan 430072, China;
Max-planck institut f\"ur mathematik In den naturwissenschaft, Inselstr.22, D-04103, Leipzig, Germany;
{\tt yongluo@whu.edu.cn}~{\em or}~{\tt yongluo@mis.mpg.de}

\vspace{0.2cm}\sc
\noindent Department of Mathematics, Purdue University, West Lafayette, IN 47907;\\ {\tt wang2482@purdue.edu}

\end{document}